\definecolor{jan}{rgb}{0.0,0.3,0.8}
\definecolor{mat}{rgb}{0.0,0.5,0.3}
\theoremstyle{plain}
\newtheorem{theorem}{Theorem}[section]
\newtheorem{corollary}[theorem]{Corollary}
\newtheorem{lemma}[theorem]{Lemma}
\newtheorem{proposition}[theorem]{Proposition}
\newtheorem{definition}[theorem]{Definition}
\newtheorem{assumption}[theorem]{Assumption}
\newtheorem*{definition*}{Definition}
\theoremstyle{remark}
\newtheorem{remark}[theorem]{Remark}
\newtheorem{example}[theorem]{Example}
\newtheorem*{claim*}{Claim}
\newtheorem*{remark*}{Remark}
\newtheorem*{example*}{Example}
\newtheorem*{notation*}{Notation}
\numberwithin{equation}{section}
\def\N{{\mathbb N}}
\def\R{{\mathbb R}}
\def\C{{\mathbb C}}
\def\M{{\mathbb M}}
 \newcommand{\Dens}{{\mathfrak P}}
\newcommand{\one}{{{\bf 1}}}
\newcommand{\sfN}{{\mathsf{N}}}
\newcommand{\eps}{\varepsilon}
\renewcommand{\phi}{\varphi}
\newcommand{\dd}{\; \mathrm{d}}
\DeclareMathOperator{\Span}{span}
\DeclareMathOperator{\Tr}{Tr}
\newcommand{\ip}[1]{\langle {#1}\rangle}
\DeclareMathOperator{\Ent}{Ent}
\newcommand{\rms}{\mathrm{s}}
\newcommand{\rmS}{\mathrm{S}}
\def\tand{\quad{\rm and}\quad}
\newcommand{\ddt}{\frac{\mathrm{d}}{\mathrm{d}t}}
\newcommand{\cM}{\mathscr{M}}
\newcommand{\cL}{\mathscr{L}}
\newcommand{\cK}{\mathscr{K}}
\newcommand{\cX}{\mathcal{X}}
\newcommand{\cA}{\mathcal{A}}
\newcommand{\cP}{\mathscr{P}}
\renewcommand{\tilde}{\widetilde}
\newcommand{\cno}[1]
{\gamma_1 \cdots \widecheck{\gamma_{#1}} \cdots \gamma_N }
\newcommand{\cnominus}[1]
{\gamma_1 \cdots \widecheck{\gamma_{#1}} \cdots \gamma_{N-1} }
\newcommand{\cnoo}[2]
{\gamma_1 \cdots \widecheck{\gamma_{#1}} \cdots \widecheck{\gamma_{#2}} \cdots \gamma_N }
\begin{document}

% \date{\today \, -- DRAFT}

\title
[Gradient flows]
{Characterisation of gradient flows for a given functional}
\address{
Institute of Science and Technology Austria (ISTA)\\
Am Campus 1\\ 
3400 \newline Klos\-ter\-neu\-burg\\ 
Austria}
\author{Morris Brooks}
\email{morris.brooks@ist.ac.at}

\author{Jan Maas}
\address{
Institute of Science and Technology Austria (ISTA)\\
Am Campus 1\\ 
3400 \newline Klos\-ter\-neu\-burg\\ 
Austria}
\email{jan.maas@ist.ac.at}

\begin{abstract}
Let $X$ be a vector field and $Y$ be a co-vector field on a smooth manifold $M$. 
Does there exist a smooth Riemannian metric 
        $g_{\alpha \beta}$
    on $M$ such that $Y_\beta = g_{\alpha \beta} X^\alpha$?
The main result of this note gives necessary and sufficient conditions for this to be true.
As an application of this result we show that a finite-dimensional ergodic Lindblad equation admits a gradient flow structure for the von Neumann relative entropy if and only if the condition of \textsc{bkm}-detailed balance holds.
\end{abstract} 

\maketitle

\section{Introduction}
\label{sec:intro}

This paper deals with the following general question:
\begin{quote}
    \emph{Let $X^\alpha \in \Gamma(T M)$ be a vector field and 
    $Y_\beta \in \Gamma(T^* M)$ be a co-vector field on 
    a smooth manifold $M$.    
    Does there exist a smooth Riemannian metric 
        $g_{\alpha \beta}$
    on $M$ such that $Y_\beta = g_{\alpha \beta} X^\alpha$?} 
    \footnote{Throughout the paper we use index notation and Einstein's summation convention. 
    Greek letters denote abstract indices, Roman letters denotes concrete indices.}
\end{quote}
Clearly, this is not always true: $X^\alpha$ and $Y_\beta$ will have to satisfy some compatibility conditions. 
Firstly, $X^\alpha$ and $Y_\beta$ need to have the same set of zeroes (critical points).
Secondly, at all other points $m \in M$, they need to satisfy $X^\alpha Y_\alpha|_m > 0$.
A third (and slightly less obvious) compatibility condition is obtained by differentiating the equation 
    $Y_\beta = g_{\alpha \beta} X^\alpha$:
at each critical point $m \in M$ 
there should exist a scalar product 
        $\bar g_{\alpha \beta} 
        \in T_m^* M \otimes_{\rm S} T_m^* M$
such that
    $
        \nabla_\alpha Y_\gamma|_m
        =
        \bar g_{\beta \gamma} 
            \nabla_\alpha X^\beta|_m
    $
for some (equivalently, any) connection 
    $\nabla_\alpha$.
This condition does not hold automatically: 
it represents a compatibility constraint on $X^\alpha$ and $Y_\beta$ 
with a natural interpretation in some examples below.

While these three conditions are clearly necessary, 
it is not obvious that they are also sufficient. 
The main result of this paper shows that this is indeed the case,
under mild smoothness and non-degeneracy assumptions;
namely,
at all critical points, 
we require 
non-degeneracy of the derivative of $Y_\beta$ 
and we assume that
$X^\alpha$ and $Y_\beta$ are real analytic in suitable local coordinates;
cf.~Section \ref{sec:main-result} for the details.

\begin{theorem}[Main result]
    \label{thm:main-intro}
    Let 
        $X^\alpha \in \Gamma(T M)$ 
    and 
        $Y_\beta \in \Gamma(T^* M)$ 
    satisfy Assumption \ref{ass:XY} below.
    Then there exists a metric 
        $g_{\alpha \beta} 
            \in \Gamma(T^* M \otimes T^* M )
        $ 
    satisfying  
        $Y_\beta = g_{\alpha \beta} X^\alpha$
    if and only if the following conditions hold:
    \begin{enumerate}[$(i)$]
    \item For all $m \in M$ with 
            $Y_\beta|_m \neq 0$
            we have
                $X^\alpha Y_\alpha|_m > 0$;            
    \item For all $m \in M$ with $Y_\beta|_m = 0$ 
            we have 
                $X^\alpha|_m = 0$;
    \item For all $m \in M$ with $Y_\beta|_m = 0$
            there exists a scalar product 
            $\bar g_{\alpha \beta} 
                \in T_m^* M \otimes_{\rm S} T_m^* M$
        such that
        \begin{align*}
            \nabla_\alpha Y_\gamma|_m
            =
            \bar g_{\beta \gamma} 
                \nabla_\alpha X^\beta|_m.
        \end{align*}
    \end{enumerate}
    \end{theorem}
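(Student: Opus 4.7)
My plan is to establish sufficiency (necessity is already spelled out in the introductory discussion) by constructing $g$ locally and then combining via a partition of unity. The key enabling observation is that at each point $m \in M$, the set of symmetric bilinear forms on $T_m M$ that are positive definite and satisfy $g_{\alpha\beta} X^\alpha|_m = Y_\beta|_m$ is convex: both the linear constraint $gX = Y$ and positive definiteness are preserved under convex combinations. Consequently, if $\{g^{(i)}\}$ are local solutions on an open cover $\{U_i\}$ of $M$ and $\{\chi_i\}$ is a smooth subordinate partition of unity, then $g := \sum_i \chi_i g^{(i)}$ is a global solution with all the required properties. The problem therefore reduces to two local constructions.

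The first, away from critical points, is straightforward. On an open set $U$ on which $X \neq 0$, condition $(i)$ gives $X^\gamma Y_\gamma > 0$, and one writes down an explicit smooth $g$ by, for example, $g_{\alpha\beta} = (X^\gamma Y_\gamma)^{-1} Y_\alpha Y_\beta + h_{\alpha\beta}$, where $h$ is any smooth symmetric positive semi-definite $(0,2)$-tensor on $U$ that annihilates $X$ and is positive definite on a smooth complement of $\Span(X)$. Such an $h$ is easily built from an auxiliary local Riemannian metric $\tilde h$ by subtracting the rank-one piece along $X$, namely $h_{\alpha\beta} = \tilde h_{\alpha\beta} - \tilde h(X,X)^{-1} (\tilde h X)_\alpha (\tilde h X)_\beta$.

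The second local construction, near a critical point $m \in M$, is the main technical step. From $(iii)$ together with non-degeneracy of $\nabla Y|_m$, one obtains invertibility of $\nabla X|_m$, so $m$ is isolated in the common zero locus of $X$ and $Y$. Working in real analytic local coordinates centered at $m$, Hadamard's lemma lets me write $X^i(x) = A^i_j(x) x^j$ and $Y_i(x) = B_{ij}(x) x^j$ with $A(0)$, $B(0)$ invertible and $B(0) = \bar g A(0)$. After a linear change of coordinates I may assume $\bar g = I$, so $A(0) = B(0)$. I would then seek $g = I + h$ with $h$ analytic, symmetric, $h(0) = 0$, satisfying the equation $h(x) X(x) = Y(x) - X(x)$; note that the right-hand side is of order $|x|^2$ by the matching of linearizations, which is consistent with $h(0)=0$. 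The plan is to solve this equation order by order in Taylor series: at each order, the algebraic system for the homogeneous piece of $h$ is solvable with positive-dimensional freedom, since the set of symmetric matrices mapping a given nonzero vector to a given covector is always a nonempty affine subspace. The hardest part, where the real analyticity hypothesis is used essentially, is to make these choices consistently so that the resulting formal series actually converges in a neighborhood of $m$; this is where I would try to invoke an analytic selection argument or a Malgrange-type division theorem. Once $h$ is constructed, $g = I + h$ is analytic, symmetric, equals $\bar g$ at $m$, and satisfies $g X = Y$ on a neighborhood of $m$; positive definiteness then follows by continuity from $g(m) = \bar g \succ 0$, completing the local construction and, by the gluing argument above, the proof.
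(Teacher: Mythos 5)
Your overall architecture matches the paper's: construct $g$ locally, then glue by a partition of unity, which works because the constraint set of positive-definite symmetric forms mapping $X$ to $Y$ at each point is convex. Your local construction away from critical points (rank-one piece $(X^\gamma Y_\gamma)^{-1} Y_\alpha Y_\beta$ plus a semidefinite complement $h$) is correct and is a valid, somewhat more canonical alternative to the paper's explicit block-matrix construction via a dual frame. The observation that the non-degeneracy hypothesis plus $(iii)$ forces $\nabla X|_m$ to be invertible and hence $m$ to be isolated in $\sfN_Y$ is also right, and the reduction via Hadamard's lemma to $X = Ax + O(|x|^2)$, $\bar g = I$, $A(0)=B(0)$ is a sensible normalization.

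The genuine gap is in the critical-point construction, and you flag it yourself without closing it. Two issues. First, ``the set of symmetric matrices mapping a given nonzero vector to a given covector is a nonempty affine subspace'' is a pointwise statement; the order-$N$ equation you need to solve is a \emph{tensor identity} of the form $\sum_{i=1}^{N} U_{c_i b}\, T^{ab}_{c_1\cdots\widecheck{c_i}\cdots c_N} = R^a_{c_1\cdots c_N}$, where $T$ must be symmetric both in its upper indices and (separately) in its $N-1$ lower indices. That the pointwise solvability on a punctured neighbourhood yields a polynomial (let alone symmetric-tensor) solution at each order is precisely what requires proof; a formula such as $h(x) = \frac{R X^{\intercal} + X R^{\intercal}}{X\cdot X} - \frac{(X\cdot R)\, X X^{\intercal}}{(X\cdot X)^2}$ solves the pointwise problem but is in general not analytic (nor even continuous) at $x=0$ — compare $x_1^2/(x_1^2+x_2^2)$ — so ``analytic selection'' is not automatic. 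Second, even granting an order-by-order solution, you do not establish convergence of the resulting formal series: you write that you would ``try to invoke an analytic selection argument or a Malgrange-type division theorem,'' but neither is carried out, and Malgrange division does not obviously apply to the equation $h(x)X(x)=R(x)$ for a symmetric-matrix unknown $h$. (An Artin-approximation argument might work for linear analytic systems, but you neither invoke it nor verify its hypotheses.) The paper instead resolves both issues constructively: Lemmas~\ref{lem:tensor-equation-2} and \ref{lem:tensor-equation-N} give a closed-form symmetric solution $T^{\alpha\beta}_{\gamma_1\cdots\gamma_{N-1}}$ to the order-$N$ tensor equation using the inverse $U^{\alpha\beta}$, and Lemma~\ref{lem:power-series-bound} feeds that explicit formula into an inductive estimate $t_N \le C\, N!\, p^N$, which then yields a convergent power series. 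Without an analogue of those two ingredients, your critical-point step does not go through.
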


The choice of the connection $\nabla$ in \emph{(iii)} is arbitrary. 

We shall also prove a variant of this result  where $X^\alpha$ and $Y_\beta$ are of class $C^{k+1}$ for some $k \in \N$. 
In this case, the metric 
    $g_{\alpha \beta}$
is of class $C^k$; see Theorem \ref{thm:Ck}
below.

While Theorem \ref{thm:main-intro} is of independent interest, 
our motivation comes from an open question
on gradient flow structures for 
dissipative quantum systems, that will be discussed below.

Let us first briefly sketch the structure of the proof.
To prove the sufficiency of conditions \emph{(i)--(iii)},
it suffices to construct a \emph{local} metric 
around every point of $M$. 
The global metric can then be constructed using a partition of unity.
Around non-critical points the construction is straightforward: 
in local coordinates, it corresponds to constructing a positive definite matrix that maps one given vector to another one.
However, it is \emph{not} trivial to construct a smooth metric satisfying  
    $Y_\beta = g_{\alpha \beta} X^\alpha$
in a neighbourhood of a critical point. 

To solve this problem, we assume that the sought metric has a power series expansion in a suitable chart around the critical point. 
We then derive an infinite hierarchy of tensor equations, 
which express power series coefficients of degree $N$  
in terms of coefficients of degree at most $N - 1$ for $N \geq 1$.
Solvability of the lowest order equation is guaranteed by compatibility condition \emph{(iii)}.
We then prove that higher order equations can be solved iteratively.
Moreover, the norms of the solutions are exponentially bounded in the degree, which allows us to construct a convergent power series that satisfies the desired equation in a neighbourhood of the critical point.

\subsection*{Application to gradient structures}

Consider now the special case where 
    $Y \in \Gamma(T^* M)$ 
is the derivative of a smooth function
    $f \in C^\infty$, 
i.e, 
    $Y_\beta = \nabla_\beta f$.
Then our question becomes: 
\emph{Does there exist a smooth Riemannian metric $g_{\alpha \beta}$ such that
$X$ is the gradient of $f$ with respect to the metric $g$, 
i.e.,
    $X^\alpha = g^{\alpha \beta} \nabla_\beta f$}?
In other words, the question is whether the 
ODE     
    $\dot u = -X(u)$ on $M$ 
can be formulated as a gradient flow equation
$
    \dot u(t) = - \nabla f\big(u(t)\big)
$
for a suitable Riemannian metric.
Our main result yields necessary and suffcient conditions.

Gradient flows describe motion in the direction of steepest descent of the function $f$ in the geometry defined by the metric $g$.
The identification of an ODE as a gradient flow equation is often fruitful, as there are powerful techniques available for the analysis of gradient flows 
\cite{Ambrosio-Gigli-Savare:2008}. 

As an application of our main result, we address an open question on the gradient flow structure of finite-dimensional dissipative quantum systems. 
To put this result into context, let us first discuss the corresponding classical setting.

\subsubsection*{Classical Markov semigroups}

Consider an irreducible continuous-time Markov chain 
on a finite set $\cX$ with transition rates 
    $q_{xy} \geq 0$ for $x, y \in \cX$ with $x \neq y$.
The associated Markov semigroup $(P_t)_{t \geq 0}$ is a $C_0$-semigroup of positive operators on $\R^\cX$ that preserves the constant functions.
Its infinitesimal generator 
    $L : \R^\cX \to \R^\cX$ 
is given by
\begin{align*}
    \big(L \psi\big)(x) := \sum_{y \in \cX} 
            q_{xy} \big( \psi(y) - \psi(x) \big).
\end{align*}
As time evolves, the marginal law of the Markov chain 
describes a curve $(\mu_t)_{t > 0}$ in 
$\cP_*(\cX)$, the simplex of probability densities with positive density. 
It evolves according to the Kolmogorov forward equation (KFE)
\begin{align*}
    \partial_t \mu_t = L^* \mu_t, 
    \quad \text{ where }
    \big(L^* \mu\big)(x) 
        = 
    \sum_{y \neq x} \mu(y) q_{yx} - \mu(x) q_{xy}
\end{align*}
for $\mu \in \cP(\cX)$.
Let 
    $\pi \in \cP_*(\cX)$ 
be the unique stationary distribution.
It is well known and easy to verify that the relative entropy
\begin{align*}
    \Ent_\pi(\mu) := \sum_{x \in \cX} 
                    \mu(x) \log \Big(\frac{\mu(x)}{\pi(x)}\Big)
\end{align*}
decreases along trajectories of the KFE.

Much more is true if the Markov chain is \emph{reversible}, i.e., the \emph{detailed balance} condition
$\pi_x q_{x y} = \pi_y q_{y x}$
holds for all $x \neq y$.
Equivalently, this means that the generator $L$ is selfadjoint in the Hilbert space 
    $L^2(\cX,\pi)$.
In this case, it was shown in  \cite{Maas:2011,Mielke:2011} that 
the KFE can be written as the gradient flow equation of $\Ent_\pi$ 
with respect to a Riemannian metric on $\cP_*(\cX)$. 
The associated Riemannian distance is given by a discrete dynamical optimal transport problem, in the spirit of the Benamou--Brenier formulation for the Wasserstein distance \cite{Benamou-Brenier:2000}. 
This gradient flow structure is a discrete version of the Wasserstein gradient flow structure for the Fokker--Planck equation discovered by Jordan, Kinderlehrer, and Otto \cite{Jordan-Kinderlehrer-Otto:1998}. 
This construction has been the starting point for the development of discrete Ricci curvature based on geodesic convexity with applications to functional inequalities 
\cite{Erbar-Maas:2012,Mielke:2013,Erbar-Maas:2014,Fathi-Maas:2016,Erbar-Henderson-Menz:2017}

It was shown by Dietert \cite{Dietert:2015} that the reversibility assumption is also necessary: 
if the KFE can be written as gradient flow equation for $\Ent_\pi$
with respect to \emph{some} Riemannian metric on $\cP_*(\cX)$, 
then the underlying Markov chain is necessarily reversible.
Combined with the results from \cite{Maas:2011,Mielke:2011}, 
this result characterises reversible Markov chains as exactly those that admit a gradient flow structure for the relative entropy $\Ent_\pi$.

In this paper we provide a noncommutative analogue of this result.

\subsubsection*{Quantum Markov semigroups}

Let $(\cP_t)_{t \geq 0}$ be a quantum Markov semigroup on a finite-dimensional $C^*$-algebra $\cA$, i.e., 
$(\cP_t)_{t \geq 0}$
is a $C_0$-semigroup of linear operators on $\cA$ such that
$\cP_t \one  = \one$  
and the operators
$\cP_t$ are completely positive, i.e., $\cP_t \otimes I_n$ is a positive operator on $\cA \otimes \M_n(\C)$ for all $n \geq 1$.
(Here, $\one \in \cA$ denotes the unit element, and $I_n$ denotes the identity operator on the algebra of $n \times n$-matrices $\M_n(\C)$.) 
The infinitesimal generator of $(\cP_t)_{t \geq 0}$ 
will be denoted by $\cL$.

Let $(\cP_t^\dagger)_{t \geq 0}$ be the adjoint semigroup with respect to the duality pairing 
    $\ip{A,B} = \Tr[A^* B]$. 
This is a $C_0$-semigroup of completely positive and trace-preserving linear operators with generator $\cL^\dagger$.
In particular, the operators $\cP_t^\dagger$ map the set of density matrices 
    $\Dens := \{ \rho \in \cA \ : \ 
                 \rho \geq 0 \text{ and }
                 \Tr[\rho] = 1 \}$
into itself.
Here we restrict our attention to the \emph{ergodic} setting: we assume that there exists a unique stationary state, i.e., a unique density matrix      
    $\sigma \in \Dens$
satisfying $\cL^\dagger \sigma = 0$.
We shall assume that $\sigma$ is invertible.

The non-commutative analogue of the KFE is the 
\emph{Lindblad equation}
    $\partial_t \rho_t = \cL^\dagger \rho_t$.
It is well known \cite{Spohn:1978,Spohn-Lebowitz:1978} that the 
    \emph{von Neumann relative entropy}
\begin{align*}
    H_\sigma(\rho) := \Tr[\rho (\log \rho - \log \sigma)]
\end{align*}
decreases along solutions to this equation.
Moreover, 
following the earlier works 
\cite{Carlen-Maas:2014,Mielke:2013a},
it was shown in \cite{Carlen-Maas:2017,Mittnenzweig-Mielke:2017} that the Lindblad equation   
    $\partial_t \rho = \cL^\dagger \rho$ 
can be written as gradient flow equation for $H_\sigma$ under the condition of \emph{\textsc{gns}-detailed balance}.
This condition means that the generator $\cL$ is selfadjoint with respect to the weighted $L^2$-type scalar product
\begin{align*}
    \ip{A,B}_\sigma^{\textsc{gns}} :=
        \Tr[ \sigma A^* B]
\end{align*}
named after Gelfand, Naimark, and Segal.
As in the discrete setting above, the associated Riemannian metric is related to a dynamical optimal transport problem.

It is now natural to ask whether the condition of \textsc{gns}-detailed balance is also necessary for the existence of a gradient flow structure for the von Neumann relative entropy.
However, it was shown in \cite{Carlen-Maas:2020} that a different symmetry condition is necessary, namely the condition of 
\emph{\textsc{bkm}-detailed balance}. 
This condition corresponds to the selfadjointness of $\cL$ with respect to another weighted $L^2$-type scalar product
\begin{align*}
    \ip{A,B}_\sigma^{\textsc{bkm}} :=
        \int_0^1 \Tr[ \sigma^{1-s} A^* \sigma^s B] \dd s,
\end{align*}
named after Bogoliubov, Kubo, and Mori.
As the condition of \textsc{bkm}-detailed balance is strictly weaker than \textsc{gns}-detailed balance \cite{Carlen-Maas:2020}, 
there was a gap between the known necessary and sufficient conditions. 
As an application of Theorem \ref{thm:main-intro} we prove the following result, which closes this gap.

\begin{theorem}
    \label{thm:QMS-intro}
    Let $\cL$ be the generator of an ergodic quantum Markov semigroup on a finite dimensional $C^*$-algebra $\cA$,
    and
    let $\sigma \in \Dens_+$ be its stationary state. 
    The following statements are equivalent:
    \begin{enumerate}[(1)]
        \item The operator $\cL$ is selfadjoint with respect to the \textsc{bkm} scalar product 
            $\ip{\cdot, \cdot}_{\sigma}^{\textsc{bkm}}$.
        \item There exists a Riemannian metric on the interior of $\Dens$ for which the Lindblad equation 
            $\dot \rho_t = \cL^\dagger \rho_t$
        is the gradient flow equation of the von Neumann relative entropy $H_\sigma$.
    \end{enumerate}
\end{theorem}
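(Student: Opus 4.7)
The direction $(2)\Rightarrow(1)$ is already established in \cite{Carlen-Maas:2020}, so the task is to prove $(1)\Rightarrow(2)$. The plan is to apply Theorem \ref{thm:main-intro} on the smooth manifold $M = \Dens_+$, viewed as an open subset of the affine space of selfadjoint trace-one elements of $\cA$, with $Y_\beta = d H_\sigma$ and with $X^\alpha$ the vector field $\rho \mapsto -\cL^\dagger\rho$. A metric $g_{\alpha\beta}$ produced by Theorem \ref{thm:main-intro} will then satisfy $dH_\sigma = g(-\cL^\dagger\rho,\cdot)$, which is exactly the statement $\cL^\dagger\rho = -\grad_g H_\sigma(\rho)$, realising the Lindblad equation $\dot\rho = \cL^\dagger \rho$ as the gradient flow of $H_\sigma$ with respect to $g$.

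The easy hypotheses are verified directly. The vector field $X$ is linear in $\rho$ and $Y$ is real analytic on $\Dens_+$ since $\rho \mapsto \rho\log\rho$ is real analytic on the positive cone; the Hessian of $H_\sigma$ at $\sigma$ is strictly positive definite, which supplies the non-degeneracy required by Assumption \ref{ass:XY}. The unique critical point of $H_\sigma$ on $\Dens_+$ is $\sigma$, where $\cL^\dagger\sigma = 0$, giving condition $(ii)$. For condition $(i)$,
\begin{align*}
X^\alpha Y_\alpha\big|_\rho = -\Tr\bigl[\cL^\dagger\rho\,(\log\rho - \log\sigma)\bigr]
\end{align*}
is the entropy production rate, and its strict positivity for $\rho \neq \sigma$ is Spohn's inequality for ergodic quantum Markov semigroups \cite{Spohn:1978}.

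The crucial step is condition $(iii)$ at $\sigma$. Using the integral formula $\int_0^\infty (\sigma+s)^{-1}\eta(\sigma+s)^{-1}\dd s$ for the Fr\'echet derivative of $\log$ at $\sigma$, a direct computation yields
\begin{align*}
\Hess H_\sigma|_\sigma(\eta,\zeta) = \int_0^\infty \Tr\bigl[\zeta(\sigma+s)^{-1}\eta(\sigma+s)^{-1}\bigr]\dd s
\end{align*}
on traceless selfadjoint elements. This is precisely the BKM metric transported from $\langle\cdot,\cdot\rangle^{\textsc{bkm}}_\sigma$ through the canonical BKM identification of operators with tangent vectors at $\sigma$ (via the modular operator $A \mapsto \int_0^1 \sigma^{1-s} A \sigma^s \dd s$). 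Under assumption $(1)$, this identification converts BKM-selfadjointness of $\cL$ on $\cA$ into selfadjointness of $\cL^\dagger$ with respect to $\Hess H_\sigma|_\sigma$ on $T_\sigma\Dens$, while ergodicity makes $\cL^\dagger$ invertible there. Setting
\begin{align*}
\bar g_{\alpha\beta}(\xi,\zeta) := -\Hess H_\sigma|_\sigma\bigl((\cL^\dagger)^{-1}\xi,\zeta\bigr),
\end{align*}
one then obtains a symmetric positive definite scalar product satisfying $\nabla_\alpha Y_\gamma|_\sigma = \bar g_{\beta\gamma}\,\nabla_\alpha X^\beta|_\sigma$, which is $(iii)$. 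The main obstacle in this argument is precisely the translation of BKM-detailed balance on $\cA$ into the geometric condition $(iii)$ on $T_\sigma\Dens_+$; once this identification is in hand, Theorem \ref{thm:main-intro} directly furnishes the metric $g$ needed for $(2)$.
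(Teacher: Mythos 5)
Your overall strategy is the same as the paper's: apply the main characterisation result on $M=\Dens_+$ with $f=H_\sigma$ and $X = \pm\cL^\dagger$, compute the Hessian of $H_\sigma$ at $\sigma$, identify it with $\langle\cdot,\cdot\rangle_\sigma^{\widetilde{\textsc{bkm}}}$, and use the duality between BKM-selfadjointness of $\cL$ and $\widetilde{\textsc{bkm}}$-selfadjointness of $\cL^\dagger$ (Lemma \ref{lem:SA-equiv}) to translate condition $(iii)$. That part is sound. (The paper goes through Corollary \ref{Corollary: Main} rather than Theorem \ref{thm:main-intro} directly, but that corollary is exactly the gradient-case repackaging you are implicitly using; the sign conventions differ but both bookkeeping choices are consistent.)

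There is, however, a genuine gap in your verification of condition $(i)$. You write that the strict positivity of $X^\alpha Y_\alpha|_\rho = I_\sigma(\rho)$ for $\rho\neq\sigma$ ``is Spohn's inequality for ergodic quantum Markov semigroups.'' Spohn's inequality \cite{Spohn:1978,Spohn-Lebowitz:1978} gives \emph{nonnegativity} and convexity of $I_\sigma$; it does \emph{not} by itself give strict positivity off the stationary state, and in the noncommutative setting this strictness is not a consequence of ergodicity alone. This is precisely why the paper includes Proposition \ref{prop:pos-entropy-production}: it proves strict positivity by showing that, \emph{under the BKM-detailed-balance hypothesis}, $I_\sigma$ is strictly convex at $\sigma$. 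Concretely, one computes $\partial_\eps^2\big|_{\eps=0} I_\sigma(\sigma+\eps A) = 2\,\langle A,\cL^\dagger A\rangle_\sigma^{\widetilde{\textsc{bkm}}}$, observes this is nonnegative by Spohn's convexity, and then shows it is nonzero for $A\neq 0$ by combining $\widetilde{\textsc{bkm}}$-selfadjointness of $\cL^\dagger$ (from BKM-detailed balance via Lemma \ref{lem:SA-equiv}) with invertibility of $\cL^\dagger$ on traceless selfadjoint elements (from ergodicity). This argument is indispensable and cannot be replaced by a citation to Spohn's original result. A related issue appears in your construction of $\bar g$: you assert, without justification, that $\bar g(\xi,\zeta) := -\Hess H_\sigma|_\sigma\bigl((\cL^\dagger)^{-1}\xi,\zeta\bigr)$ is positive definite, but this again reduces to the definiteness of the quadratic form $A\mapsto \langle A,\cL^\dagger A\rangle_\sigma^{\widetilde{\textsc{bkm}}}$ on traceless selfadjoint elements, i.e., to the very same computation. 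So both your condition $(i)$ and your condition $(iii)$ hinge on an unproven sign-definiteness of $\cL^\dagger$ in the $\widetilde{\textsc{bkm}}$ inner product, which you would need to establish along the lines of Proposition \ref{prop:pos-entropy-production}.
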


The implication \emph{(2) $\Rightarrow$ (1)} was proved in \cite[Theorem 2.9]{Carlen-Maas:2020}. The converse implication is new.

\subsection*{Structure of the paper}

Section \ref{sec:main-result} contains the main result and a reformulation of the result in the gradient case.
The proof of the main result is contained in Section \ref{sec:proof}, except for the construction of the local metric, which is presented in Section \ref{sec:critical}.
Section \ref{sec:continuous} 
deals with 
the construction of 
a metric of class $C^k$ under the assumption that the fields $X^\alpha$ and $Y_\beta$
are of class $C^{k+1}$.
The application to quantum Markov semigroups is contained in Section \ref{sec: QMS}.

\section{Main results}
\label{sec:main-result}

Let 
    $X^\alpha \in \Gamma(T M)$ 
be a vector field and 
    $Y_\beta \in \Gamma(T^* M)$ 
be a co-vector field on a smooth manifold $M$.
Let 
    $\sfN_Y := \{ m \in M \ : \ Y|_m = 0 \}$ 
be the set of critical points of $Y$. 

In the sequel we impose the following mild assumptions on the fields $X^\alpha$ and $Y_\beta$.

\begin{assumption}
    \label{ass:XY}
    \begin{enumerate}[$(i)$]
    \item (Non-degeneracy)
        \label{ass:nondegenerate}
        The bilinear form 
            $\nabla_\alpha Y_\beta|_m$ 
        is non-degenerate for all $m \in \sfN_Y$ for some 
        (equivalently, any) connection $\nabla$. 
    \item (Real analyticity)
        \label{ass:smooth}
    For all $m \in \sfN_Y$ there exists a neighbourhood 
        $U_m \ni m$, 
    an open set 
        $\Omega \subset \R^n$,
    and a coordinate chart $\phi_m : U_m \to \Omega$, such that the fields 
        $\tilde X^a 
            := X^a \circ \phi_m^{-1} 
            : \Omega \to \R$ 
            and
        $\tilde Y_a 
            := Y_a \circ \phi_m^{-1} 
            : \Omega \to \R$ 
    have a converging power series expansion
    around $\phi_m(m)$
    for all $a \in \{1, \ldots, n\}$. 
\end{enumerate}
\end{assumption}

\begin{remark}
    The choice of the connection in \emph{(i)} above is irrelevant, since the difference of two connections $\nabla$ and $\tilde \nabla$ satisfies 
        $\tilde \nabla_\alpha Y_\beta 
            -   \nabla_\alpha Y_\beta
            = \Gamma_{\alpha \beta}^\gamma  Y_\gamma$, 
    where 
        $\Gamma_{\alpha \beta}^\gamma$
    is a $(1,2)$ tensor.
    In particular, \
        $\tilde \nabla_\alpha Y_\beta 
        =   \nabla_\alpha Y_\beta$ 
    for 
        $m \in \sfN_Y$.
    For the same reason, the choice of the connection is irrelevant in \emph{(iii)} in the following result.
\end{remark}

Using the notation introduced above, we restate our main result (Theorem \ref{thm:main-intro}) for the convenience of the reader.

\begin{theorem}[Main result]
\label{Theorem: Main}
Let 
    $X^\alpha \in \Gamma(T M)$ 
and 
    $Y_\beta \in \Gamma(T^* M)$ 
satisfy Assumption \ref{ass:XY}.
Then there exists a smooth metric 
    $g_{\alpha \beta} \in \Gamma(T^* M \otimes T^* M )$ 
satisfying  
$Y_\beta = g_{\alpha \beta} X^\alpha$, 
if and only if the following conditions hold:
\begin{enumerate}[$(i)$]
\item $X^\alpha Y_\alpha|_m>0$ 
        for all $m \in M \setminus \sfN_Y$;
\item $X^\alpha|_m=0$ 
        for all $m \in \sfN_Y$;
\item For all $m \in \sfN_Y$  
        there exists a scalar product 
        $\bar g_{\alpha \beta} 
            \in 
        T_m^* M \otimes_{\rm S} T_m^* M$, 
    such that
    \begin{align*}
        \nabla_\alpha Y_\gamma|_m
            =
            \bar g_{\beta \gamma} 
                \nabla_\alpha X^\beta|_m,
    \end{align*}
where $\nabla_\alpha$ is an arbitrary connection.
\end{enumerate}
\end{theorem}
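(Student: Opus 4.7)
\emph{Necessity} is immediate. Pairing $Y_\beta=g_{\alpha\beta}X^\alpha$ with $X^\beta$ gives $X^\alpha Y_\alpha=g_{\alpha\beta}X^\alpha X^\beta\geq 0$, vanishing exactly when $X=0$; combined with invertibility of $g$ this yields \emph{(i)} and \emph{(ii)}. Differentiating the identity at a critical point $m\in\sfN_Y$ and using $X|_m=0$ gives $\nabla_\alpha Y_\gamma|_m=g_{\beta\gamma}|_m\,\nabla_\alpha X^\beta|_m$, so $\bar g:=g|_m$ witnesses \emph{(iii)}.

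\emph{Sufficiency.} I build $g$ locally and glue by a partition of unity; since $Y=gX$ is linear in $g$ and positive-definiteness is convex, the gluing is harmless. At a point $m_0$ with $Y\neq 0$, fix a chart and any background positive definite $h_{\alpha\beta}$ and take
\begin{align*}
g_{\alpha\beta}:=\frac{Y_\alpha Y_\beta}{X^\gamma Y_\gamma}+\lambda\!\left(h_{\alpha\beta}-\frac{(hX)_\alpha(hX)_\beta}{X^\gamma(hX)_\gamma}\right),\qquad\lambda>0.
\end{align*}
Direct computation gives $g_{\alpha\beta}X^\beta=Y_\alpha$; symmetry is clear, and $g$ is positive definite because the kernels of the two positive-semidefinite summands, the annihilator of $Y$ and the line $\R X$ respectively, intersect only at $0$ by \emph{(i)}.

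The core of the proof is the local construction near a critical point $m_0\in\sfN_Y$. In the analytic chart supplied by Assumption~\ref{ass:XY}\emph{(ii)}, decompose $X=\sum_{k\geq 1}X^{(k)}$ and $Y=\sum_{k\geq 1}Y^{(k)}$ into homogeneous parts and seek $g=\sum_{k\geq 0}g^{(k)}$ with $g^{(k)}$ a symmetric $(0,2)$-tensor-valued polynomial of degree $k$. Matching degree $N$ in $gX=Y$ yields the hierarchy
\begin{align*}
g^{(N-1)}_{ab}(x)\,X^{(1),b}(x)=Y^{(N)}_a(x)-\sum_{k=0}^{N-2}g^{(k)}_{ab}(x)\,X^{(N-k),b}(x)=:F^{(N)}_a(x),\qquad N\geq 1.
\end{align*}
At $N=1$ this is $g^{(0)}_{ab}A^b_c=B_{ac}$ with $A=\nabla X|_{m_0}$ and $B=\nabla Y|_{m_0}$, solved by $g^{(0)}:=\bar g$ from \emph{(iii)}; the non-degeneracy of $B$ in Assumption~\ref{ass:XY}\emph{(i)} forces $A$ to be invertible. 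The linear change of variables $y=Ax$ then reduces every higher level to the model problem: given a degree-$N$ polynomial covector $\tilde F_a(y)$, find a symmetric-in-$(a,b)$ polynomial tensor $\tilde g_{ab}(y)$ of degree $N-1$ with $\tilde g_{ab}(y)\,y^b=\tilde F_a(y)$.

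This last step is the main obstacle. A non-symmetric solution is immediate from integration along rays, $\hat g_{ab}(y)=\int_0^1\partial_b\tilde F_a(ty)\dd t$, so the task is to correct $\hat g$ by an element of the generally non-trivial Koszul-type kernel of $G\mapsto G\cdot y$ into a symmetric one, and to do so with controlled norms. I expect solvability to rest on a compatibility that $\tilde F$ inherits from the fact that the previous levels of the hierarchy have already been realised by symmetric tensors $g^{(0)},\dots,g^{(N-2)}$. Once solvability is granted, choosing $g^{(N-1)}$ of minimal norm among symmetric solutions and combining the real-analytic bounds on the Taylor data of $X$ and $Y$ with a uniform operator-norm bound on the chosen right inverse gives, by induction, $\|g^{(N-1)}\|\lesssim\rho^{-N}$ for some $\rho>0$. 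The formal series $g=\sum_k g^{(k)}$ then converges on a neighbourhood of $m_0$, positive-definiteness of $g^{(0)}=\bar g$ persists on a small enough ball by continuity, and a smooth partition of unity subordinate to the resulting open cover assembles the local data into a global smooth metric with $Y=gX$.
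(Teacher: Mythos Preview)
Your architecture coincides with the paper's: local constructions glued by a partition of unity, the critical-point case handled by a power-series ansatz whose coefficients solve an iterative hierarchy. Your reduction via $y=Ax$ to the model problem $\tilde g_{ab}(y)\,y^b=\tilde F_a(y)$ with $\tilde g$ symmetric is exactly the paper's tensor equation~\eqref{eq:first-multi} (written there for the inverse metric), specialised to $U_{\alpha\beta}=\delta_{\alpha\beta}$.

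The gap is that you do not solve the model problem. You ``expect solvability to rest on a compatibility that $\tilde F$ inherits'' from earlier levels, but this is both unproved and off the mark: \emph{no compatibility is needed}. The paper's Lemma~\ref{lem:tensor-equation-N} shows the equation is solvable for \emph{every} right-hand side by exhibiting an explicit symmetric solution, which in your normalised coordinates reads
\begin{align*}
\tilde g_{ab}(y)=\frac{1}{N}\Big(\partial_b\tilde F_a(y)+\partial_a\tilde F_b(y)\Big)-\frac{1}{N(N-1)}\,y^c\,\partial_a\partial_b\tilde F_c(y);
\end{align*}
one checks $\tilde g_{ab}y^b=\tilde F_a$ directly from Euler's identity $y^b\partial_b\tilde F_a=N\tilde F_a$. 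This is precisely the missing symmetric correction of your $\hat g$. The same formula also supplies the ``uniform operator-norm bound on the chosen right inverse'' that you assert but do not justify: the prefactors $1/N$ and $1/N(N-1)$ exactly offset the growth of the derivatives of a degree-$N$ homogeneous polynomial, giving $\|\tilde g\|\leq C\|\tilde F\|$ with $C$ independent of $N$ (cf.\ Lemma~\ref{lem:power-series-bound}). Without an explicit solution of this type, neither the solvability nor the quantitative bound is in hand, and your induction does not close.
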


\begin{remark}
    As the necessity of the three conditions has been discussed above, it remains to prove their sufficiency.
    This will be done in Section \ref{sec:proof} below.
\end{remark}

In the special case where the co-vector field 
    $Y_\alpha := \nabla_\alpha F \in \Gamma(T^* M)$ 
is the derivative of a scalar function $f : M \to \R$, the above result admits a convenient reformulation.  
Assuming that $f$ attains its minimum at a unique critical point $\bar m \in M$,
the next results shows that property \emph{(iii)} above is 
equivalent to the symmetry and positivity of
the linearised map 
    $\Lambda: T_{\bar m} M \to T_{\bar m} M$, 
    $Z \mapsto \nabla_Z X$, 
at the critical point $\bar m$.
The relevant scalar product is given by the Hessian of $f$.

\begin{corollary}[Gradient case]
\label{Corollary: Main}
    Let 
        $f \in C^\infty(M)$ 
    be a function and 
        $X^\alpha \in \Gamma(TM)$ 
    be a vector field, such that 
        $X^\alpha$ and $Y_\alpha := \nabla_\alpha f$ 
    satisfy Assumption \ref{ass:XY}. 
    Suppose that $Y$ has a unique zero, $\bar m \in M$, 
    at which $f$ attains its  minimum.
    Then there exists a Riemannian metric
        $g_{\alpha \beta}
        \in \Gamma(T^* M \otimes T^* M )$
    satisfying    
    \begin{align*}
        \nabla_\beta f = g_{\alpha \beta} X^\alpha,
    \end{align*}
    if and only if the following conditions hold:
    \begin{enumerate}[(i)]
    \item $\nabla_{X^\alpha} f |_m < 0$ for all $m \in M$ with $m \neq \bar m$;
    \item $X^\alpha|_{\bar m} = 0$;
    \item The linear map 
        $\Lambda := \nabla_\alpha X^\beta|_{\bar m} 
                : 
            T_{\bar m} M \to T_{\bar m} M$ 
        is positive and symmetric with respect to the Hessian scalar product 
            $h_{\alpha \beta} := \nabla_\alpha \nabla_\beta f|_{\bar m}$ on $T_{\bar m} M$.
    \end{enumerate}
    \end{corollary}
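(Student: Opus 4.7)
The plan is to apply Theorem \ref{Theorem: Main} to the co-vector field $Y_\beta := \nabla_\beta f$ and translate its three conditions into those of the corollary. Since $f$ attains its minimum at the unique critical point $\bar m$, we have $\sfN_Y = \{\bar m\}$, so the theorem's conditions reduce to one global condition on $M \setminus \{\bar m\}$ together with two pointwise conditions at $\bar m$.

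Conditions \emph{(i)} and \emph{(ii)} would match immediately: using $X^\alpha Y_\alpha = X^\alpha \nabla_\alpha f = \nabla_{X^\alpha} f$, the theorem's sign condition corresponds to the condition in \emph{(i)} of the corollary, and condition \emph{(ii)} reads verbatim in both formulations.

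The substantive step is condition \emph{(iii)}. Setting $h_{\alpha\gamma} := \nabla_\alpha \nabla_\gamma f|_{\bar m}$ for the Hessian and $\Lambda^\beta_\alpha := \nabla_\alpha X^\beta|_{\bar m}$, the theorem's condition asks for the existence of a symmetric positive definite bilinear form $\bar g_{\beta\gamma}$ on $T_{\bar m} M$ solving
\begin{align*}
    h_{\alpha\gamma} = \bar g_{\beta\gamma} \Lambda^\beta_\alpha.
\end{align*}
By Assumption \ref{ass:XY} together with the fact that $\bar m$ is a minimum of $f$, the Hessian $h$ is positive definite. The remaining task is then to show, by elementary linear algebra on $T_{\bar m} M$, that the displayed equation admits a symmetric positive definite solution $\bar g$ if and only if $\Lambda$ is $h$-symmetric and $h$-positive.

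For the forward direction, the identity combined with symmetry of $h$ forces $\Lambda$ to be $\bar g$-symmetric, after which a short computation $h(\Lambda U, V) = \bar g(\Lambda^2 U, V) = \bar g(\Lambda U, \Lambda V) = \bar g(U, \Lambda^2 V) = h(U, \Lambda V)$ yields $h$-symmetry, while $h(\Lambda U, U) = \bar g(\Lambda U, \Lambda U) \geq 0$ (with equality only at $U = 0$, since $\bar g$ is positive definite and $\Lambda$ is invertible) gives $h$-positivity. For the converse, diagonalising $\Lambda$ in an $h$-orthonormal basis produces a diagonal matrix with positive entries $\lambda_i$; taking $\bar g$ to be $\Lambda^{-1}$ expressed in that basis produces the desired symmetric positive definite bilinear form and satisfies the displayed equation. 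I do not anticipate a significant obstacle: once Theorem \ref{Theorem: Main} is in hand, the entire content of the corollary is the linear-algebraic equivalence in \emph{(iii)}, which becomes transparent after passing to an $h$-orthonormal basis.
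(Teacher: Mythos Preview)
Your proposal is correct and follows essentially the same approach as the paper: both reduce the corollary to Theorem \ref{Theorem: Main}, with conditions \emph{(i)} and \emph{(ii)} matching directly and condition \emph{(iii)} handled by elementary linear algebra at $\bar m$. The only cosmetic differences are that the paper computes $h(\Lambda Z,W)=h_{\alpha\beta}\bar g^{\alpha\delta}h_{\delta\gamma}Z^\gamma W^\beta$ directly (rather than first establishing $\bar g$-symmetry of $\Lambda$) and, for the converse, constructs $\bar g$ via the explicit tensor formula $\bar g^{\alpha\beta}=h^{\alpha\delta}\tilde g_{\delta\gamma}h^{\gamma\beta}$ with $\tilde g_{\alpha\beta}=h_{\gamma\beta}\nabla_\alpha X^\gamma$ rather than diagonalising in an $h$-orthonormal basis---but in such a basis the two constructions coincide.
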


    \begin{proof}
    It is clear that the conditions \emph{(i)} and \emph{(ii)} match the corresponding conditions in Theorem \ref{Theorem: Main}.

    Suppose now that condition \emph{(iii)} from Theorem \ref{Theorem: Main} holds, for some scalar product 
        $\bar g^{\alpha \beta} \in T_{\bar m} M \otimes_{\rm S} T_{\bar m} M$.
    We have to show that
    \begin{align*}
        h_{\alpha \beta}
            (\Lambda Z)^\alpha W^\beta
        & = h_{\alpha \beta}
            Z^\alpha (\Lambda W)^\beta
        && \text{for all } Z^\alpha, W^\alpha \in T_{\bar m} M, \text{ and } \\
        h_{\alpha \beta}
        (\Lambda Z)^\alpha Z^\beta 
        & > 0    
        && \text{for all }Z^\alpha \in T_{\bar m} M, \ 
        Z^\alpha \neq 0.
    \end{align*}
    To show this, note that
        $(\Lambda Z)^\alpha
            = Z^\gamma \nabla_\gamma X^\alpha
            = Z^\gamma \bar g^{\alpha \delta} 
                h_{\delta \gamma}$
    for $Z^\alpha \in T_{\bar m} M$.
    Hence, for $W^\alpha \in T_{\bar m} M$, 
    we see that the expression
    \begin{align*}
        h_{\alpha \beta}
            (\Lambda Z)^\alpha W^\beta
        =    
            h_{\alpha \beta}
            \bar g^{\alpha \delta} 
            h_{\delta \gamma}
                Z^\gamma W^\beta
    \end{align*}
    is invariant under interchanging $Z$ and $W$, which proves the desired symmetry.
    Moreover, this expression implies that
        $h_{\alpha \beta}
        (\Lambda Z)^\alpha Z^\beta
        = \bar g^{\alpha \beta} 
            \tilde Z_\alpha
            \tilde Z_\beta$
    where $\tilde Z_\alpha = h_{\alpha \beta} Z^\beta$.
    Since 
        $h_{\alpha \beta}$ 
    is invertible
    by Assumption \ref{ass:XY}
    and
        $\bar g^{\alpha \beta}$ 
    is positive definite, it follows that  
        $h_{\alpha \beta}
        (\Lambda Z)^\alpha Z^\beta
        > 0$ 
    whenever $Z^\alpha \neq 0$. 

    Conversely, suppose that condition \emph{(iii)} of the corollary holds. 
    For all $Z^\alpha, W^\alpha \in T_{\bar m} M$
    it follows that 
    $
        h_{\alpha \beta}
            (\Lambda Z)^\alpha W^\beta
        = \tilde g_{\alpha \beta} 
            Z^\alpha W^\beta
    $
    for a positive and symmetric tensor
    $\tilde g_{\alpha \beta} \in 
    T_{\bar m}^* M \otimes_{\rm S} T_{\bar m}^* M$.
    Since 
    $
        h_{\alpha \beta}
            (\Lambda Z)^\alpha W^\beta
        =     
        h_{\alpha \beta}
        Z^\gamma \nabla_\gamma X^\alpha  W^\beta
    $
    we infer that 
    $\tilde g_{\alpha \beta}
        = h_{\gamma \beta} \nabla_\alpha X^\gamma$.
    Now define 
    \begin{align*}
        \bar g^{\alpha \beta}
            := h^{\alpha \delta} 
            \tilde g_{\delta \gamma} 
                 h^{\gamma \beta}
        \in T_{\bar m} M \otimes T_{\bar m} M.
    \end{align*}
    Since $\tilde g_{\alpha \beta}$ is positive and symmetric and $ h^{\alpha \delta}$ is invertible, 
        $\bar g^{\alpha \beta}$
    defines a scalar product. Moreover, we have the desired identity
    $
        \nabla_\alpha X^\beta|_{\bar m} 
        = 
        \bar g^{\beta \gamma} h_{\alpha \gamma},
    $
    which completes the proof.
\end{proof}

In the special case were $Y_\beta$ is the derivative of a scalar function $f$, the existence of a metric 
satisfying 
        $\nabla_\beta f = g_{\alpha \beta} X^\alpha$
was proved in \cite{Barta-Chill-Fasangova:2012} on the complement of the set of critical points. 
The existence of a metric 
    with the desired property 
    on the whole manifold was stated as an open question \cite[Question 1]{Barta-Chill-Fasangova:2012}.
    Subsequently, under an additional assumption, which corresponds to $(iii)$ in Theorem \ref{Theorem: Main}, the existence of a \emph{continuous} extension of $g_{\alpha \beta}$ to all of $M$ was obtained in \cite{Bily:2014}; 
    cf.~Section \ref{sec:continuous} below for more details.
    However, the metric constructed \cite{Bily:2014} is 
    in general \emph{not} differentiable, even if the fields $X^\alpha$ and $Y_\beta$ are smooth; see Example \ref{ex:nondifferentiable} below.

    Here we show that $C^k$-regularity of the metric can be obtained if the fields $X^\alpha$ and $Y_\beta$ are assumed to be of class $C^{k+1}$.

    \begin{theorem}[Existence of a metric of class $C^k$]
        \label{thm:Ck}
        Let 
            $X^\alpha$ 
        and 
            $Y_\beta$
        be of class $C^{k+1}$ on $M$
        for some $k \in \N$
        and assume that  
            $\nabla_\alpha Y_\beta|_m$ 
        is non-degenerate for all $m \in \sfN_Y$ for some 
        (equivalently, any) connection $\nabla$.
        Then there exists a metric 
            $g_{\alpha \beta} 
            $
        of class $C^k$ on $M$
        satisfying  
        $Y_\beta = g_{\alpha \beta} X^\alpha$ 
        if and only if conditions $(i)$, $(ii)$, and $(iii)$ of Theorem \ref{Theorem: Main} hold.
        \end{theorem}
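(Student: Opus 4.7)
The necessity of conditions \emph{(i)--(iii)} requires only $C^1$ regularity and follows by the arguments given just before Theorem \ref{thm:main-intro}; the content of the theorem is therefore sufficiency. My plan is to mirror the architecture of the proof of Theorem \ref{Theorem: Main}: construct a $C^k$ local metric in a neighbourhood of every point of $M$ and paste via a partition of unity, exploiting that the set of symmetric positive-definite tensors at a fixed point satisfying the pointwise equation $Y_\beta = g_{\alpha\beta} X^\alpha$ is convex, so convex combinations preserve the constraint.

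Away from $\sfN_Y$ the construction is elementary and of the same regularity as the data. Fixing any $C^k$ background metric $g_0$, the formula
\begin{equation*}
    g_{\alpha\beta} := \frac{Y_\alpha Y_\beta}{X^\gamma Y_\gamma} + (g_0)_{\alpha\beta} - \frac{(g_0)_{\alpha\gamma} X^\gamma (g_0)_{\beta\delta} X^\delta}{(g_0)_{\mu\nu} X^\mu X^\nu}
\end{equation*}
is manifestly symmetric, satisfies $Y_\beta = g_{\alpha\beta} X^\alpha$, and is positive definite by condition \emph{(i)}; it inherits $C^k$ regularity from $X, Y \in C^{k+1}$ on any open subset where $X^\gamma Y_\gamma > 0$. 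The heart of the proof is therefore the construction of a $C^k$ local metric in a neighbourhood of each critical point $m_0 \in \sfN_Y$.

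Around $m_0$ the plan is to revisit the hierarchy of tensor equations of Section \ref{sec:critical} and run it \emph{in finite order only}. Each equation in that hierarchy is linear and expresses the $N$-th Taylor coefficient of $g$ at $m_0$ in terms of the lower coefficients and Taylor coefficients of $X, Y$ of order at most $N+1$: the base case $N=0$ is solvable by condition \emph{(iii)}, and higher-order solvability is the same linear-algebra statement as in the analytic case, whose proof makes no use of analyticity. Running the hierarchy for $N = 0, 1, \ldots, k$ consumes $k+1$ derivatives of $X, Y$---exactly the assumed regularity---and produces a polynomial $P(x)$ of degree $k$ in a local chart around $m_0$, with $P(0)$ positive definite, such that the residual $R(x) := \widetilde{Y}(x) - P(x)\widetilde{X}(x)$ has vanishing $k$-jet at the origin; by Taylor's theorem applied to the $C^{k+1}$ data, $R \in C^k$ with $R(x) = o(|x|^{k+1})$ and compatible derivative estimates.

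It remains to correct $P$ to an exact solution without losing $C^k$ regularity. Setting $g = P + h$ reduces the problem to constructing a symmetric $C^k$ correction $h$ with $h(x)\widetilde{X}(x) = R(x)$ and vanishing $k$-jet at the origin, so that positive-definiteness of $P(0)$ is preserved nearby. A natural ansatz is the rank-one formula $h_{\alpha\beta} := R_\alpha R_\beta / (\widetilde{X}^\gamma R_\gamma)$, which is symmetric, satisfies $h\widetilde{X} = R$, and is defined on a punctured neighbourhood of $0$. The main obstacle I anticipate is verifying $C^k$ extension of this (or a suitable variant) across $m_0$: the denominator vanishes to high but \emph{direction-dependent} order, so $C^k$ regularity requires a careful combination of the order-of-vanishing estimates for $R$ with the Taylor matching enforced by the hierarchy---possibly by adding a symmetric background correction supported in directions transverse to $\widetilde{X}$, or by invoking a Whitney-type extension of the explicit punctured-neighbourhood formula. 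Once $h$ is constructed, $g := P + h$ is a $C^k$ local metric near $m_0$, and the global $C^k$ metric is assembled by partition of unity.
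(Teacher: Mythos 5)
Your overall architecture (local constructions pasted by a partition of unity, with a truncated version of the tensor hierarchy from Section \ref{sec:critical} as the key local ingredient) coincides with the paper's, and your key insight — that running the hierarchy for $N = 0, 1, \ldots, k$ consumes exactly $k+1$ derivatives of the data and produces a degree-$k$ Taylor polynomial $P$ making the deficit $R := Y - P\,X$ vanish to high order at the critical point — is precisely what drives the paper's proof. (One small imprecision: the hierarchy with $T$'s determined through degree $k$ forces the $(k+1)$-jet of $R$ to vanish, not just the $k$-jet, which is what makes $R = o(|x - \bar x|^{k+1})$ correct; equations up to $N$ involve derivatives of $X, Y$ up to order $N$ but determine the $(N-1)$-st Taylor coefficient of $g$.)

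The genuine gap is the correction step. Your ansatz $h_{\alpha\beta} := R_\alpha R_\beta / (\tilde X^\gamma R_\gamma)$ has the denominator $\tilde X \cdot R$. Since $R = Y - PX$ vanishes to order $k + 2$ at $\bar m$ and $X$ to order $1$, we only know $\tilde X \cdot R = o(|x - \bar x|^{k+2})$ — an \emph{upper} bound. Nothing in conditions $(i)$--$(iii)$ gives a lower bound, and $\tilde X \cdot R = X \cdot Y - P(X,X)$ can perfectly well vanish or change sign along sequences approaching $\bar m$, so the quotient is not even defined on a punctured neighbourhood, let alone of class $C^k$. Neither a ``transverse background correction'' nor a Whitney extension repairs this, because the obstruction is that the numerator does not vanish on the zero set of the denominator.

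What the paper does instead is apply, with $P$ as the background metric, the correction formula from Proposition \ref{prop:cont}:
\begin{equation*}
    h_{\alpha\beta}
    = \frac{R_\alpha Y_\beta + R_\beta Y_\alpha}{X^\delta Y_\delta}
      - \frac{(R_\gamma X^\gamma)\, Y_\alpha Y_\beta}{(X^\delta Y_\delta)^2},
\end{equation*}
which is symmetric, satisfies $h_{\alpha\beta} X^\alpha = R_\beta$, and has denominator $X \cdot Y$, \emph{independent of} $R$. By condition $(i)$ the denominator is strictly positive off $\sfN_Y$, and the key estimate \eqref{eq:XY-eq} (using $(iii)$ and the non-degeneracy of $\nabla Y$) gives $X^\delta Y_\delta \geq \kappa \tilde\kappa |x - \bar x|^2 + o(|x - \bar x|^2)$. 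With $R$ vanishing to order $k+2$, every partial derivative of $h$ of order $\leq k$ is $O(|x - \bar x|)$ and extends by zero across $\sfN_Y$. So your truncated Taylor polynomial is the right background; replace the rank-one ansatz by this correction and the proof closes. (Incidentally, your explicit formula for the non-critical case is a valid alternative to the paper's Theorem \ref{Theorem: Local non-crit}, though the paper reuses the same correction formula globally rather than patching a separate non-critical formula.)
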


The proof of this result will be given in Section \ref{sec:continuous} below. 
It relies on the construction based on tensor equations that we develop in the proof of Theorem \ref{Theorem: Main}.

\section{Proof of the main result}
\label{sec:proof}

Our main result (Theorem \ref{Theorem: Main}) 
relies on two local versions of this result. 
First we construct a local solution around any non-critical point $m \in M \setminus \sfN_Y$.
In the special case were $Y_\beta$ is the derivative of a scalar function, a different construction of a metric away from critical points was carried out in \cite{Barta-Chill-Fasangova:2012}; see Section \ref{sec:continuous} below.

\begin{theorem}[Local solutions around non-critical points]
\label{Theorem: Local non-crit}
    Suppose that
        $X^\alpha \in \Gamma(T M)$ 
    and     
        $Y_\beta \in \Gamma(T^* M)$ 
    satisfy 
        $X^\alpha Y_\alpha|_{\bar m}>0$ 
    for some $\bar m \in M$.
    Then there exists a neighbourhood $U$ of $\bar m$ 
    and a smooth local metric 
        $g_{\alpha  \beta} : U \to T^*M \otimes T^*M$ 
    such that
    \begin{align}
    \label{eq:claim}
        X^\alpha|_m = g^{\alpha \beta} Y_\beta|_m
    \end{align}
    for all $m \in U$.
\end{theorem}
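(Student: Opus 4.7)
The plan is to write down an explicit local formula for the inverse metric $g^{\alpha\beta}$ satisfying $g^{\alpha\beta} Y_\beta = X^\alpha$, exploiting the fact that by continuity the positivity $X^\alpha Y_\alpha > 0$ persists in a whole neighbourhood $U$ of $\bar m$. To make the construction coordinate-free, I would first fix an auxiliary (background) Riemannian metric $h_{\alpha\beta}$ on $M$, which always exists on a (paracompact) smooth manifold via a partition of unity, and use its inverse to raise the index of $Y$, obtaining the vector field $\tilde Y^\alpha := h^{\alpha\beta} Y_\beta$. Since $X^\alpha Y_\alpha|_{\bar m} > 0$, after shrinking $U$ we may assume $X^\alpha Y_\alpha > 0$ and $\tilde Y^\gamma Y_\gamma = h^{\alpha\beta} Y_\alpha Y_\beta > 0$ throughout $U$.

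The key step is to define the symmetric contravariant $2$-tensor field
\begin{align*}
    g^{\alpha \beta}
    := \frac{X^\alpha X^\beta}{X^\gamma Y_\gamma}
       + h^{\alpha \beta}
       - \frac{\tilde Y^\alpha \tilde Y^\beta}{\tilde Y^\gamma Y_\gamma}
    \qquad \text{on } U.
\end{align*}
This is manifestly smooth, symmetric, and well-defined on $U$. Contracting with $Y_\beta$ yields $X^\alpha$: the first term contributes $X^\alpha \cdot (X^\gamma Y_\gamma)/(X^\gamma Y_\gamma) = X^\alpha$, while the remaining two terms give $\tilde Y^\alpha - \tilde Y^\alpha = 0$.

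The only non-routine step is positive definiteness. For an arbitrary covector $\omega_\alpha \in T_m^* M$, writing $\tilde \omega^\alpha := h^{\alpha\beta} \omega_\beta$, I would compute
\begin{align*}
    g^{\alpha \beta} \omega_\alpha \omega_\beta
    = \frac{(X^\alpha \omega_\alpha)^2}{X^\gamma Y_\gamma}
    + \left( h^{\alpha\beta} \omega_\alpha \omega_\beta
             - \frac{(\tilde Y^\alpha \omega_\alpha)^2}{\tilde Y^\gamma Y_\gamma}\right).
\end{align*}
The first summand is non-negative since $X^\gamma Y_\gamma > 0$. The bracketed expression is non-negative by the Cauchy--Schwarz inequality applied to $h$ (it equals $|\tilde \omega|_h^2 - \langle \tilde \omega, \tilde Y\rangle_h^2 / |\tilde Y|_h^2$), and vanishes precisely when $\tilde \omega$ is proportional to $\tilde Y$, i.e.\ $\omega_\alpha = t Y_\alpha$ for some $t \in \R$. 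In that case the first summand becomes $t^2 X^\gamma Y_\gamma$, which vanishes only for $t = 0$. Hence $g^{\alpha\beta}$ is positive definite on $U$, and its inverse $g_{\alpha\beta}$ is the desired smooth local metric.

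There is no serious obstacle here: the heart of the argument is the linear-algebraic observation that a symmetric positive-definite matrix with a prescribed image of a single nonzero vector can be built as a rank-one term $X^\alpha X^\beta / (X\cdot Y)$ (adjusted so $Y \mapsto X$) plus the $h$-orthogonal projection transverse to $Y$; the auxiliary metric $h$ is used only to supply a geometrically meaningful complement to $\tilde Y$ in a smooth, chart-independent way.
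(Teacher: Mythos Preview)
Your proof is correct. Both arguments construct an explicit inverse metric $g^{\alpha\beta}$ with $g^{\alpha\beta}Y_\beta = X^\alpha$, but the constructions differ. The paper works in a local dual frame $(e^1_\alpha,\ldots,e^n_\alpha)$ with $e^1_\alpha = Y_\alpha$, so that $Y$ has coordinates $(1,0,\ldots,0)$, and writes down the matrix
\[
G = \begin{bmatrix} X^1 & \bar X^\intercal \\ \bar X & f I_{n-1} \end{bmatrix},
\qquad f = \tfrac{X^1}{2} + \tfrac{2}{X^1}|\bar X|^2,
\]
verifying $G>0$ by an explicit algebraic decomposition. Your construction is instead coordinate-free: you fix an auxiliary metric $h$ and take the rank-one tensor $X^\alpha X^\beta/(X\cdot Y)$ plus the $h^{-1}$-orthogonal projector away from $Y$, with positivity following from Cauchy--Schwarz. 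Your formula is arguably cleaner and more geometric, and avoids the choice of a frame; the paper's version is self-contained in that it needs no background metric, only a local frame extending $Y$. Either way the content is the same elementary linear-algebra fact, and both proofs are equally short.
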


\begin{proof}
Since $X^\alpha Y_\alpha|_m > 0$, we have $Y_\alpha|_m \neq 0$. 
Therefore, we can complete the co-vector field 
    $e^1_\alpha := Y_\alpha \in T^* M$ to a dual frame 
    $E := ( e^1_\alpha, \ldots, e^n_\alpha )$ 
in a neighbourhood $V$ of $m$, 
i.e., $( e^1_\alpha|_m, \ldots, e^n_\alpha|_m )$ 
is a basis of $T_m^* M$ for all $m \in V$. 
The coordinates of $X^\alpha$ with respect to this frame 
are given by 
    $X^j := X^\alpha e^j_\alpha : V \to \R$
for $j = 1, \ldots, n$. 
Since $X^1|_{\bar m} > 0$, 
the set $U := V \cap \{ X^1 > 0 \}$ 
is still a neighbourhood of $\bar m$. 
Let us define 
    $\bar X : U \to \R^{n-1}$ 
and
    $f : U \to \R$ 
by
\begin{align*}
    \bar X := (X^2, \ldots, X^n), 
        \qquad
    f := \frac{X^1}{2} + \frac{2}{X^1} |\bar{X}|^2.
\end{align*}
We then define the bilinear form 
    $g^{\alpha \beta}$ 
in coordinates 
    $G = (g^{i j})_{i,j=1}^n$ as
\begin{align*}
    G :=
    \begin{bmatrix}
        X^1 & \bar{X}^\intercal\\
        \bar{X} & f I_{n-1}
    \end{bmatrix},
\end{align*}
where $I_n$ is the identity matrix.
Since the matrix $G$ is symmetric, the bilinear form $g$ is symmetric as well. 
To verify that $G > 0$, we write
\begin{align*}
G & = 
    \begin{bmatrix}
            \sqrt{\frac{X^1}{2}} 
            \\ \sqrt{\frac{2}{X^1}}\bar{X}
        \end{bmatrix}
    \begin{bmatrix}
            \sqrt{\frac{X^1}{2}} 
            & \sqrt{\frac{2}{X^1}}\bar{X}^\intercal
    \end{bmatrix}
    + 
    \begin{bmatrix}
        \frac{X^1}{2} & 0 \\ 
        0 & f I_{n-1} - \frac{2}{X^1} \bar{X} \bar{X}^\intercal 
    \end{bmatrix}
    \\ 
    & \geq 
    \begin{bmatrix}
       \frac{X^1}{2} & 0 \\ 
            0 & \big( f - \frac{2}{X^1} |\bar{X}|^2 \big) I_{n-1}
        \end{bmatrix}
    = 
    \frac{X^1}{2} I_{n} > 0,
\end{align*}
as desired.
To complete the proof,
note that the coordinates of $Y_\alpha$ are 
given by $Y_1 = 1$ and $Y_j = 0$ for $j \neq 1$.
Consequently,
\begin{align*}
    ( g^{\alpha \beta} Y_\beta )^i
    = \sum_j g^{i j} Y_j
    = g^{i 1}
    = X^i,
\end{align*}
which shows \eqref{eq:claim}.
\end{proof}

The second local version of Theorem \ref{Theorem: Main} concerns the construction of a smooth local metric in a neighbourhood of a critical point.

\begin{theorem}[Local solutions around critical points]
\label{Theorem: Local Crit}
Let 
        $X^\alpha \in \Gamma(T M)$ 
    and     
        $Y_\beta \in \Gamma(T^* M)$ 
    satisfy Assumption \ref{ass:XY}.
Suppose that
        $X^\alpha|_{\bar m} = Y_\alpha|_{\bar m} = 0$
for some $\bar m \in M$,
and suppose that 
there exists a scalar product 
        $\bar g \in T_{\bar m} M \otimes_{\rm S} T_{\bar m} M$, 
    such that
    \begin{align*}
        \nabla_\alpha X^\beta|_{\bar m} 
        = 
        \bar g^{\beta \gamma}
            \nabla_\alpha Y_\gamma|_{\bar m}.
    \end{align*}
Then there exists a neighbourhood $U$ of $m$ 
and a smooth local metric    
    $g_{\alpha \beta} : U \to T^*M \otimes T^*M$ 
such that
\begin{align*}
    X^\alpha|_m = g^{\alpha \beta} Y_\beta|_m
\end{align*}
    for all $m \in U$.
\end{theorem}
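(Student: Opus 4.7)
The plan is to construct the local inverse metric $g^{\alpha\beta}$ directly as a convergent power series in coordinates around $\bar m$, following the strategy sketched in the introduction. By Assumption~\ref{ass:XY} \emph{(ii)} one can pick a chart sending $\bar m \mapsto 0 \in \R^n$ on which $X^a$ and $Y_a$ are represented by convergent power series on some ball $\{|x|<r_0\}$. Since $X|_{\bar m}=Y|_{\bar m}=0$, these expansions start at degree $1$,
\begin{equation*}
X^a(x)=\sum_{N\geq 1}X^a_N(x),\qquad Y_a(x)=\sum_{N\geq 1}Y_{a,N}(x),
\end{equation*}
with $X^a_N, Y_{a,N}$ homogeneous polynomials of degree $N$. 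I would then seek the inverse metric in the form $g^{ab}(x)=\bar g^{ab}+\sum_{N\geq 1}g^{ab}_N(x)$, where each $g^{ab}_N$ is a symmetric $(2,0)$-tensor-valued homogeneous polynomial of degree $N$ and the zeroth-order term is the prescribed $\bar g^{ab}$ from the hypothesis.

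Comparing the homogeneous parts of degree $N\geq 1$ in $X^a=g^{ab}Y_b$ produces the hierarchy
\begin{equation*}
g^{ab}_{N-1}(x)\,Y_{b,1}(x)=R^a_N(x),\qquad R^a_N := X^a_N-\sum_{k=0}^{N-2}g^{ab}_k\,Y_{b,\,N-k}.
\end{equation*}
The base case $N=1$ is precisely $\bar g^{ab}\nabla_c Y_b|_{\bar m}=\nabla_c X^a|_{\bar m}$, which is the hypothesis on $\bar g$. For $N\geq 2$ the right-hand side $R^a_N$ is fully determined by $g^{ab}_0,\ldots,g^{ab}_{N-2}$, so the task at each order reduces to solving a single linear equation for the symmetric polynomial $g^{ab}_{N-1}$ of degree $N-1$.

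The main obstacle is a quantitative algebraic solvability lemma for this linear equation. Using the non-degeneracy of $Y_{b,1}=\nabla Y|_{\bar m}$, we may, after a linear change of variables, assume $Y_{b,1}(x)=x_b$, and then we need: for every vector-valued homogeneous polynomial $R^a(x)$ of degree $N\geq 2$, the existence of a symmetric-tensor-valued homogeneous polynomial $g^{ab}(x)$ of degree $N-1$ with $g^{ab}(x)x_b=R^a(x)$, satisfying a bound of the form $\sup_{|x|\leq 1}|g^{ab}|\leq C_n N^p\sup_{|x|\leq 1}|R^a|$ for constants $C_n,p$ independent of $N$. A natural construction is to start from the non-symmetric right inverse $\hat g^{ab}:=\tfrac{1}{N}\partial_b R^a$, which by Euler's identity already satisfies $\hat g^{ab}x_b=R^a$, and then add a Koszul-type correction $x_c h^{abc}$ with $h^{abc}$ antisymmetric in $(b,c)$ to restore the symmetry in $(a,b)$. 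The existence and norm control of this correction is the core difficulty; I would establish it by showing that the antisymmetric part $\hat g^{ab}-\hat g^{ba}$ lies in the image of the relevant symmetrization-Koszul map, and by tracking constants carefully so as to rule out factorial blow-up in $N$.

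Granted this lemma, a standard Cauchy-product bootstrap closes the argument. From the analyticity bounds $\sup_{|x|\leq 1}|X^a_N|,|Y_{a,N}|\leq M r_0^{-N}$ and the inductive hypothesis $\sup_{|x|\leq 1}|g^{ab}_k|\leq K\rho^{-k}$ for $k\leq N-2$, one estimates $\sup_{|x|\leq 1}|R^a_N|\lesssim \rho^{-N}$, and the solvability lemma then yields $\sup_{|x|\leq 1}|g^{ab}_{N-1}|\leq K\rho^{-(N-1)}$, provided $\rho$ is chosen sufficiently small relative to $r_0$ and the constants in the lemma so as to absorb the polynomial factor $N^p$. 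The resulting series $\bar g^{ab}+\sum_{N\geq 1}g^{ab}_N(x)$ then converges absolutely on $\{|x|<\rho\}$, defining a real-analytic inverse metric $g^{ab}$ with $X^a=g^{ab}Y_b$ on this neighbourhood. Positive-definiteness on a possibly smaller neighbourhood of $\bar m$ follows by continuity from $g^{ab}|_0=\bar g^{ab}>0$, and the sought lower-index metric $g_{\alpha\beta}$ is obtained by inversion.
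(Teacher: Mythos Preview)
Your overall plan coincides with the paper's: write $g^{ab}$ as a power series, derive a hierarchy of linear equations for the homogeneous pieces, solve them order by order, and bound the growth. The paper works with the Taylor tensors $T^{ab}_{c_1\cdots c_{N-1}}$ and records the hierarchy as
\[
\sum_{i=1}^N U_{c_i b}\,T^{ab}_{c_1\cdots\widecheck{c_i}\cdots c_N}=R^a_{c_1\cdots c_N},
\]
which after your change of variables $U=\mathrm{Id}$ is exactly the polarised form of $g^{ab}_{N-1}(x)\,x_b=R^a_N(x)$. Your Euler ansatz $\hat g^{ab}=\tfrac1N\partial_bR^a$ is the first term of the paper's explicit solution (Lemma~\ref{lem:tensor-equation-N}); translating that lemma into your polynomial language gives
\[
g^{ab}_{N-1}=\tfrac1N\big(\partial_bR^a+\partial_aR^b\big)-\tfrac1{N(N-1)}\sum_c x_c\,\partial_a\partial_bR^c,
\]
so the ``Koszul-type correction'' you are looking for is simply $h^{abc}=\tfrac1{N(N-1)}\big(\partial_a\partial_cR^b-\partial_a\partial_bR^c\big)$, manifestly antisymmetric in $(b,c)$. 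In other words, the missing existence step is exactly what Lemma~\ref{lem:tensor-equation-N} supplies.

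There is, however, a genuine gap in your convergence argument. A bound of the form $\|g_{N-1}\|\le C_nN^p\|R_N\|$ with $p>0$ \emph{cannot} be ``absorbed by choosing $\rho$ small'': feeding it into your bootstrap gives $\|g_{N-1}\|\le C_nN^p\cdot C(K,M,r_0,\rho)\,\rho^{-(N-1)}$, and closing the induction $\|g_{N-1}\|\le K\rho^{-(N-1)}$ would require $C_nN^p\cdot C\le K$ \emph{uniformly in $N$}, which is impossible. The paper avoids this by working in the tensor-coefficient norm $\|T\|_\infty=\max|T^{ab}_{c_1\cdots c_{N-1}}|$; in that norm the explicit formula yields $t_{N-1}\le(\tilde K/N)\,r_N$, and it is precisely this $1/N$ (in any case $O(1)$) factor that makes the induction in Lemma~\ref{lem:power-series-bound} close. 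If you insist on sup-norms over the unit ball, the same explicit formula together with Bernstein's inequality for homogeneous polynomials still gives an $O(1)$ inversion constant, which would salvage your bootstrap---but you need the formula, not just an abstract $N^p$ bound.
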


The proof of Theorem \ref{Theorem: Local Crit} is the main challenge of this paper and will be carried out in section \ref{sec:critical}.

We now show that the main result (Theorem \ref{Theorem: Main}) follows readily 
from the local Theorems 
    \ref{Theorem: Local non-crit} 
and  
    \ref{Theorem: Local Crit}
using a partition of unity argument; 
see, e.g., \cite[Theorem 1.131]{Gallot-Hulin-Lafontaine:2004} for the existence
of a partition of unity.

\begin{proof}[Proof of Theorem \ref{Theorem: Main}]
    The local results Theorems 
    \ref{Theorem: Local non-crit} 
    and  
    \ref{Theorem: Local Crit}
    guarantee that for any $m \in M$ 
    there exists a neighbourhood $U_m$ 
    and a local metric 
        $g_{\alpha \beta}$ 
    defined on $U_m$, 
    such that the desired identity
    \begin{align*}
        X^\alpha = g^{\alpha \beta} Y_\beta,
    \end{align*} 
    holds on $U_m$. 
    
    Let $\{ f_k \}_{k \in \N}$ 
    be a partition of unity subordinated to the cover 
        $\{ U_m : m \in M \}$ of the manifold $M$, 
    i.e.,
    there exists a locally finite open covering 
        $\{V_k\}_{k \in \N}$
    of $M$, such that each $V_k$ is contained in $U_{m_k}$ for some $m_k \in M$,    
    each function
    $f_k : M \to \R$ is nonnegative and smooth and 
    its support is contained in $V_k$, 
    and we have
    $
        \sum_{k \in \N} f_k(m) = 1
    $
    for all $m \in M$ 
    (where the sum is finite for each $m$).
    We then define 
    \begin{align*}
        g^{\alpha \beta} 
            :=
        \sum_{k \in \N}
            f_k  g_{m_k}^{\alpha \beta}.
    \end{align*}
    As $g^{\alpha \beta}$ is a  
    finite convex combination of the scalar products
        $g^{\alpha \beta}_{m_k}$, 
    it is a scalar product. 
    By linearity, $g^{\alpha \beta}$
    satisfies the desired equation 
        $X^\alpha = g^{\alpha \beta} Y_\beta$.
\end{proof}

\section{Local solutions around critical points}
\label{sec:critical}

In this section we give the proof of Theorem \ref{Theorem: Local Crit}, which deals with the construction of the metric around critical points.

Fix $\bar m \in M$ and let $\phi : U \to \Omega$ be a coordinate chart 
which maps a neighbourhood 
    $U$ of $\bar m$
onto an open set $\Omega \subseteq \R^n$.
Using this chart we can identify the vector field
    $X^\alpha \in \Gamma(T M)$
defined on $U \subseteq M$
with the function
    $\tilde X^\alpha : \Omega \to V := \R^n$,
where $\tilde X^\alpha := X^\alpha \circ \phi^{-1}$.
Similarly, the co-vector field
    $Y_\beta \in \Gamma(T^* M)$
defined on $U \subseteq M$ 
can be identified with a function
    $\tilde Y_\beta : \Omega \to V^*$,
and the metric 
    $g_{\alpha \beta} 
        \in \Gamma(T^* M \otimes_\rmS T^* M)$
can be identified with a function
    $\tilde g_{\alpha \beta} 
        : \Omega \to V^* \otimes_\rmS V^*$.
In the remainder of this section, we will work on a fixed chart and remove the tildes to lighten notation.

\subsection{Motivation of the tensor equations}

Let $\bar x \in \Omega$ be such that 
    $Y_\beta|_{\bar x} = 0$,
and suppose that the identity 
    $X^\alpha = g^{\alpha \beta} Y_\beta$
holds in a neighbourhood of $\bar x$.
For $N \in \N$ and all indices 
    $c_1, \ldots, c_N \in \{ 1, \ldots, n \}$
we will derive a system of equations that the partial derivatives 
    $T_{c_1 \cdots c_N}^{a b} 
        := 
    \partial_{c_1} \cdots \partial_{c_N} g^{ab}$
satisfy at $x = \bar x$.

Taking partial differentives 
    $\partial_c$ for $c \in \{1, \ldots, n\}$ 
yields
\begin{align*}
    \partial_c X^a 
        = 
    \partial_c g^{a b} Y_b
        +
    g^{a b} \partial_c Y_b.
\end{align*}
Since $Y_b|_{\bar x} = 0$, we find that 
\begin{align*}
    \partial_c X^a 
        = 
    g^{a b} \partial_c Y_b
\end{align*}
at $x = \bar x$.
Taking second order derivatives, we find, 
for $c_1, c_2 \in \{1, \ldots, n\}$,
\begin{align*}
    \partial_{c_1} \partial_{c_2} X^a 
            = 
    \partial_{c_1} \partial_{c_2}  g^{a b} Y_b
        +
    \partial_{c_1} g^{a b} \partial_{c_2} Y_b
        +
    \partial_{c_2} g^{a b} \partial_{c_1} Y_b
        +
    g^{a b} \partial_{c_1} \partial_{c_2} Y_b.
\end{align*}
As $Y_b|_{\bar x} = 0$, the first term on the right-hand side vanishes, 
and we infer that the tensor of first-order derivatives
    $T_c^{a b} := \partial_c g^{a b}$ 
is a solution to the system
\begin{align*}    
    U_{c_2 b} T_{c_1}^{a b}
    +   U_{c_1 b} T_{c_2}^{a b}
    =   R_{c_1 c_2}^a,
\end{align*}
where 
    $U_{a b} 
    := \partial_a Y_b
    $
and 
    $R_{c_1 c_2}^a 
    := \partial_{c_1} \partial_{c_2} X^a 
    - g^{a b} \partial_{c_1} \partial_{c_2} Y_b.
    $

More generally, for $N = 1, 2, \ldots$, we find
\begin{align*}
    \partial_{c_1} \cdots \partial_{c_N} 
        X^a 
    = \sum_{ S \subseteq [N] }
    \partial_{c_S}
        g^{a b}
    \partial_{c_{[N] \setminus S}} 
        Y_b,
\end{align*}
where we use the shorthand notation
    $\partial_{c_S} 
    =  
        \partial_{c_{i_1}} 
        \cdots 
        \partial_{c_{i_k}}$
for $S = \{i_1, \ldots, i_k\}
    \subseteq \{1, \ldots, N\}$
with $i_\mu \neq i_\nu$ for $\mu \neq \nu$.
Since $Y_b = 0$, the term with $|S| = N$ vanishes.
Thus, the derivatives of order $(N-1)$, given by
    $T_{c_1 \cdots c_{N-1}}^{a b} 
    := 
        \partial_{c_1} \cdots \partial_{c_{N-1}}
        g^{a b}$ 
solve the system
\begin{align}
\label{eq:tensor-N}
    \sum_{i = 1}^N 
    U_{c_i b} 
    T_{c_1 
    \cdots \widecheck{c_i} 
    \cdots c_N 
    }^{a b}
    =   R_{c_1 \cdots c_N}^a,
\end{align}
where 
    $U_{c b} := \partial_c Y_b$,
and
\begin{align*}
    R_{c_1 \cdots c_N}^a 
    := 
    \partial_{c_1} \cdots \partial_{c_N} 
    X^a 
    - 
    \sum_{ \substack{S \subseteq [N] \\ |S| < N-1 }}
    \partial_{c_S}
    g^{a b}
    \partial_{c_{[N] \setminus S}} 
    Y_b
\end{align*}
depends on (derivatives of) $X$ and $Y$, and on derivatives of $g$ of order at most $N-2$.
The notation $T_{c_1 
\cdots \widecheck{c_i} 
\cdots c_N 
}^{a b}
$ means that the index $c_i$ is removed.

The identity \eqref{eq:tensor-N} suggests an iterative scheme to construct a local solution $g^{\alpha \beta}$ to the equation 
    $X^\alpha = g^{\alpha \beta} Y_\beta$
around a critical point $\bar x \in U$
as a power series    
    \begin{align*}
    g^{a b}|_{\bar x} := 
        \sum_{N = 0}^\infty
            \frac{1}{N!}
                T^{a b}_{c_1 \cdots c_N}
                (x - \bar x)^{c_1} \cdots 
                (x - \bar x)^{c_N}
    \end{align*}
    with coefficients 
        $T^{\alpha \beta}_{\gamma_1 \cdots \gamma_N}
            \in 
        V^{\otimes_\rmS 2} \otimes (V^*)^{\otimes_\rmS N}$
The idea is to define, for $N = 0$, 
    $T^{a b} := \bar g^{a b}$, 
where 
    $\bar g
        \in 
        T_{\bar x}^* M 
            \otimes_\rmS 
        T_{\bar x}^* M$
is the scalar product satisfying
\begin{align*}
    \partial_c X^a|_{\bar x} 
        = \bar g^{a b} 
        \ \partial_c Y_b|_{\bar x},
\end{align*}
which exists by assumption.
Higher order Taylor coefficients 
    $T_{c_1 \ldots c_N}^{a b}$ 
are then constructed by iteratively solving a system of tensor equations of the form \eqref{eq:tensor-N}.
    
Section \ref{sec:tensor-eq} deals with the existence of a solution to these equations.
The construction and the convergence of the iterative scheme is contained in Section \ref{sec:scheme}.

\subsection{Solving the tensor equations}
\label{sec:tensor-eq}

We start by formulating an explicit solution to the tensor equation \eqref{eq:tensor-N} of order $N = 2$.

\begin{lemma}
\label{lem:tensor-equation-2}    
    Let $V$ be a finite-dimensional vector space, 
    and let
    $R_{\gamma \delta}^\alpha \in 
    V \otimes (V^* \otimes_\rmS V^*) $
    and 
        $U_{\alpha \beta}\in V^* \otimes V^*$
    be given.
    We assume that $U_{a \beta}$ is invertible with inverse
        $U^{\alpha \beta}\in V \otimes V$. 
    Then the tensor 
        $T_\gamma^{\alpha \beta} 
            \in 
            (V \otimes V) \otimes V^*$
    defined by
    \begin{align*}
        T_\gamma^{\alpha \beta}
            :=  
        \frac12
            \Big(
                U^{\beta \delta} R_{\gamma \delta}^\alpha
            +   U^{\alpha \delta} R_{\gamma \delta}^\beta
            -   U_{\gamma \gamma'} U^{\alpha \alpha'} U^{\beta \beta'} 
                R_{\alpha' \beta'}^{\gamma'}
            \Big)
    \end{align*}
    satisfies the equations 
        $T_\gamma^{\alpha \beta} = T_\gamma^{\beta \alpha}$ 
    and
    \begin{align}   
    \label{eq:first}
        U_{\delta \beta} T_\gamma^{\alpha \beta}
    +   U_{\gamma \beta} T_\delta^{\alpha \beta}
    =   R_{\gamma \delta}^\alpha.
    \end{align}
\end{lemma}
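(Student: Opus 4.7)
My plan is a direct index computation. First I verify $T_\gamma^{\alpha\beta}=T_\gamma^{\beta\alpha}$: under $\alpha\leftrightarrow\beta$, the first two summands in the definition manifestly exchange, while the third summand $U_{\gamma\gamma'}U^{\alpha\alpha'}U^{\beta\beta'}R_{\alpha'\beta'}^{\gamma'}$ becomes $U_{\gamma\gamma'}U^{\beta\alpha'}U^{\alpha\beta'}R_{\alpha'\beta'}^{\gamma'}$, which coincides with the original after relabelling the dummies $\alpha'\leftrightarrow\beta'$ and invoking the assumed symmetry of $R$ in its two lower indices.

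For the tensor equation $(\ref{eq:first})$, I would substitute the definition of $T$ into the left-hand side and expand $2\bigl(U_{\delta\beta}T_\gamma^{\alpha\beta}+U_{\gamma\beta}T_\delta^{\alpha\beta}\bigr)$ into six summands. Two of them are of the form $U_{\delta\beta}U^{\beta\epsilon}R_{\gamma\epsilon}^\alpha$ and its $\gamma\leftrightarrow\delta$ partner; these collapse via $U_{\delta\beta}U^{\beta\epsilon}=\delta_\delta^\epsilon$ and, using the symmetry of $R$ in the lower pair, give $2R_{\gamma\delta}^\alpha$. Two further summands come from the third term in the definition; the analogous contraction $U_{\delta\beta}U^{\beta\rho}=\delta_\delta^\rho$ reduces them to $-U_{\gamma\mu}U^{\alpha\nu}R_{\delta\nu}^\mu$ and its $\gamma\leftrightarrow\delta$ partner, after rewriting $R_{\nu\delta}^\mu=R_{\delta\nu}^\mu$. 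The final two summands, of the form $U_{\delta\beta}U^{\alpha\epsilon}R_{\gamma\epsilon}^\beta$ and its $\gamma\leftrightarrow\delta$ partner, do not simplify but match the previous pair exactly after the dummy relabelling $\beta\to\mu$, $\epsilon\to\nu$; thus the four residual contributions cancel pairwise, and what remains on the right-hand side is precisely $2R_{\gamma\delta}^\alpha$.

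I do not anticipate any real obstacle: the statement is a short linear-algebraic identity, and the three-term formula for $T$ is engineered so that exactly these cancellations occur. The only point demanding care is bookkeeping: since $U_{\alpha\beta}$ is not assumed symmetric, one must respect the order of indices in contractions such as $U_{\delta\beta}U^{\beta\epsilon}$ versus $U^{\epsilon\beta}U_{\beta\delta}$, and this is what forces the specific placement of indices in the third summand of the definition of $T$. In particular, the construction can be understood as the unique symmetrisation (in $\alpha,\beta$) of the naive candidate $U^{\beta\epsilon}R_{\gamma\epsilon}^\alpha$, with a correction designed to kill the leftover asymmetric cross terms.
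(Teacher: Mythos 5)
Your proposal is correct and follows essentially the same route as the paper: a direct index computation that expands $2\bigl(U_{\delta\beta}T_\gamma^{\alpha\beta}+U_{\gamma\beta}T_\delta^{\alpha\beta}\bigr)$ into six terms, collapses two of them to $2R_{\gamma\delta}^\alpha$ via $U_{\delta\beta}U^{\beta\epsilon}=\delta_\delta^\epsilon$, and shows the remaining four cancel pairwise after relabelling dummies and using the symmetry of $R$ in its lower indices. Your remark on the care needed with index placement when $U$ is not assumed symmetric is accurate and worth keeping in mind.
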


\begin{proof}
    The fact that $T_\gamma^{\alpha \beta} = T_\gamma^{\beta \alpha}$ follows readily from the definition. 
    To show that \eqref{eq:first} holds, note that by definition of $T$,
    \begin{align}
     \label{eq:proof-a}   
        2 U_{\delta \beta} T_\gamma^{\alpha \beta}
        & = 
        R_{\gamma \delta}^\alpha
        +
        U_{\delta \beta} U^{\alpha \epsilon} R_{\gamma \epsilon}^\beta
        -
        U_{\gamma \gamma'} U^{\alpha \alpha'} R_{\alpha' \delta}^{\gamma'}, \\
    \label{eq:proof-b}   
        2 U_{\gamma \beta} T_d^{\alpha \beta}
        & = 
        R_{\delta \gamma}^\alpha
        +
        U_{\gamma \beta} U^{\alpha \epsilon} R_{\delta \epsilon}^\beta
        -
        U_{\delta \delta'} U^{\alpha \alpha'} R_{\alpha' \gamma}^{\delta'}.
    \end{align}
    Relabeling indices on the right-hand side and using the symmetry of $R$, we observe that the second term in \eqref{eq:proof-a} equals the third term in \eqref{eq:proof-b}, and the second term in \eqref{eq:proof-b} equals the third term in \eqref{eq:proof-a}.
    Summing these identities, we thus obtain \eqref{eq:first}.
\end{proof}

We also need the following multilinear generalisation.

\begin{lemma}
\label{lem:tensor-equation-N}     
    Fix $N \geq 2$.
    Let $V$ be a finite-dimensional vector space, 
    and let
        $R_{\gamma_1 \cdots \gamma_N}^\alpha 
        \in 
        V
        \otimes 
        (V^*)^{\otimes_\rms N}$
    and 
        $U_{\alpha \beta} \in V^* \otimes V^*$
    be given.
    We assume that $U_{\alpha \beta}$ is invertible with inverse
        $U^{\alpha \beta} \in V \otimes V$. 
    Then the tensor 
        $T_{\gamma_1 \cdots \gamma_{N-1}}^{\alpha \beta}
            \in 
            V^{\otimes_\rms 2} 
            \otimes 
            (V^*)^{\otimes_\rms (N-1)} 
        $
    defined by
    \begin{align}
    \label{eq:T-def}
        T_{\gamma_1 \cdots \gamma_{N-1}}^{\alpha \beta}
            :=  
        \frac{1}{N}
            \bigg(
                U^{\beta \delta} R_{\delta \gamma_1 \cdots \gamma_{N-1}}^\alpha
            +   U^{\alpha \delta} R_{\delta \gamma_1 \cdots \gamma_{N-1}}^\beta
            -   \frac{1}{N-1} \sum_{i = 1}^{N - 1}
                    U_{\gamma_i \gamma_i'} 
                    U^{\alpha \alpha'} 
                    U^{\beta \beta'} 
                    R_{\alpha' \beta' \cnominus{i}}^{\gamma_i'}
            \bigg)
    \end{align}
    satisfies 
    \begin{align}   
    \label{eq:first-multi}
        \sum_{i = 1}^N 
            U_{\gamma_i \beta} 
            T_{\gamma_1 
                \cdots \widecheck{\gamma_i} 
                \cdots \gamma_N 
            }^{\alpha \beta}
        =   R_{\gamma_1 \cdots \gamma_N}^\alpha.
    \end{align}
\end{lemma}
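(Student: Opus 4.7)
The plan is to substitute the explicit formula \eqref{eq:T-def} for $T$ directly into the left-hand side of \eqref{eq:first-multi} and verify, term by term, that the three summands in the definition of $T$ conspire in exactly the right way: the first summand alone reproduces $R^\alpha_{\gamma_1 \cdots \gamma_N}$, while the contributions of the second and third summands cancel each other. This is a direct multilinear generalisation of the calculation carried out for $N = 2$ in Lemma \ref{lem:tensor-equation-2}, and no new analytic ingredient is needed.

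For the first summand, applying the identity $U_{\gamma_i \beta} U^{\beta\delta} = \delta^\delta_{\gamma_i}$ gives
\[
\frac{1}{N} \sum_{i=1}^N U_{\gamma_i \beta} \, U^{\beta\delta} R^\alpha_{\delta \cno{i}}
= \frac{1}{N} \sum_{i=1}^N R^\alpha_{\gamma_i \cno{i}}
= R^\alpha_{\gamma_1 \cdots \gamma_N},
\]
where the last equality uses the symmetry of $R$ in its lower indices (each of the $N$ summands reorganises to $R^\alpha_{\gamma_1 \cdots \gamma_N}$). For the second summand, analogous manipulations yield the contribution
\[
\frac{1}{N} U^{\alpha\alpha'} \sum_{i=1}^N U_{\gamma_i \gamma_i'} \, R^{\gamma_i'}_{\alpha' \cno{i}}
\]
after relabeling dummy indices.

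For the third summand, which itself carries a double sum over $i$ and $j \neq i$, I would use $U_{\gamma_i \beta} U^{\beta\beta'} = \delta^{\beta'}_{\gamma_i}$ to reinsert $\gamma_i$ as a lower index of $R$. The key observation is that, by the symmetry of $R$ in its lower indices, the resulting inner expression no longer depends on $i$. Summing over the $N-1$ values $i \in \{1,\ldots,N\}\setminus\{j\}$ therefore produces a factor $N-1$ that exactly cancels the prefactor $\frac{1}{N-1}$ in \eqref{eq:T-def}. After a final relabeling, this third piece is precisely the negative of the contribution from the second summand, so the two cancel and only the first piece survives.

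The main obstacle is purely combinatorial book-keeping: carefully tracking which index is removed from which slot of $R$ and invoking the symmetry of $R$ at the right moment to collapse the inner sum in the third summand. Once the three pieces are written out and the dummy indices are matched, the cancellation is mechanical, and the $N=2$ case already contains the entire idea.
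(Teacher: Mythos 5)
Your proposal is correct and follows essentially the same route as the paper: substitute \eqref{eq:T-def} into the left-hand side of \eqref{eq:first-multi}, use $U_{\gamma_i\beta}U^{\beta\delta}=\delta_{\gamma_i}^\delta$ so that the first summand alone yields $R^\alpha_{\gamma_1\cdots\gamma_N}$, and then observe that the second and third summands cancel after invoking the symmetry of $R$ in its lower indices (which makes the double sum in the third piece collapse, absorbing the factor $\frac{1}{N-1}$) and a relabeling of dummy indices. No gap; the ``purely combinatorial book-keeping'' you flag is exactly what the paper's proof carries out.
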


\begin{proof}
    The fact that $T$ belongs to  
    $V^{\otimes_\rms 2}
        \otimes 
    (V^*)^{\otimes_\rms (N-1)}         
    $
    follows readily from the definition. 
    To show that \eqref{eq:first-multi} holds, note that 
    \begin{align*}
    &   \sum_{i=1}^N
            U_{\gamma_i \beta} 
            T_{ \cno{i} }^{\alpha \beta}
    =   \frac1N \sum_{i=1}^N
        \bigg\{
            U_{\gamma_i \beta} U^{\beta \delta} R_{\delta \cno{i}}^\alpha
        +   U_{\gamma_i \beta} U^{\alpha \delta} R_{\delta \cno{i}}^\beta
    \\& \qquad \qquad\qquad \qquad\qquad \qquad  \quad
        -   \frac{1}{N-1} \sum_{j : j \neq i}
            U_{\gamma_i \beta} U^{\beta \beta'} U_{\gamma_j \gamma_j'} U^{\alpha \alpha'}
            R_{\alpha' \beta' \cnoo{i}{j}}^{\gamma_j'}
        \bigg\}
    \\& =   \frac1N \sum_{i=1}^N
        \bigg\{
            R_{\gamma_1 \cdots \gamma_N}^\alpha
        +   U_{\gamma_i \beta} U^{\alpha \delta} R_{\delta \cno{i}}^\beta
        -   \frac{1}{N-1} \sum_{j : j \neq i}
            U_{\gamma_j \gamma_j'} U^{\alpha \alpha'}
            R_{\alpha' \cno{j}}^{\gamma_j'}
        \bigg\}.
    \end{align*}
    This yields the result, as the first term has the desired form, and the second term cancels against the third term, as can be seen by renaming indices $(\alpha', \gamma_j')$ into $(\delta,\beta)$.
\end{proof}

\subsection{Iterative construction of the power series \& Proof of Theorem \ref{Theorem: Local Crit}}
\label{sec:scheme}

We now place ourselves in the setting of Theorem \ref{Theorem: Local Crit}.
Thus, let
    $X^\alpha \in \Gamma(T M)$ 
and     
    $Y_\beta \in \Gamma(T^* M)$ 
satisfy Assumption \ref{ass:XY}, 
and suppose that
    $X^\alpha|_{\bar m} = Y_\alpha|_{\bar m} = 0$
for some fixed $\bar m \in M$.
We assume that 
there exists a scalar product 
    $\bar g \in T_{\bar m} M \otimes_{\rm S} T_{\bar m} M$
satisfying
\begin{align*}
    \nabla_\alpha X^\beta|_{\bar m} 
    = 
    \bar g^{\beta \gamma}
        \nabla_\alpha Y_\gamma|_{\bar m}.
\end{align*}
Our goal is to construct the local metric $g^{\alpha \beta}$ around 
    $\bar m$
as a convergent power series centered at $\bar x = \phi(\bar m)$.
We now present the definition of its coeffients
    $T_{c_1 \cdots c_N}^{a b}$,
which is motivated by the equations \eqref{eq:tensor-N}.
Our computations will be performed in a fixed chart 
    $\phi : U \to \Omega$ around $\bar m$ 
which satisfies Assumption \ref{ass:XY}.

\begin{definition}[The power series coeffients 
        $T_{c_1 \cdots c_N}^{a b}$]\mbox{}
\label{def:power-series}
Write 
    $U_{\alpha \beta} := \nabla_\alpha Y_\beta|_{\bar m}$
for brevity. 
\begin{itemize}
    \item 
\emph{Initialisation:}
We define the initial tensor $T^{\alpha \beta} \in 
    V \otimes_\rmS V$ 
of our iteration as
\begin{align*}
    T^{a b} := \bar g^{a b}.
\end{align*}

\item
\emph{Iterative step (special case $N=2$):}
We first define 
    $R_{\gamma \delta}^\alpha \in 
    V \otimes (V^* \otimes_\rmS V^*)$ 
by
\begin{align*}
    R_{c d}^a 
    := \partial_{c} \partial_{d} X^a 
    - T^{a b} \partial_{c} \partial_{d} Y_b
\end{align*}
and then define 
    $T_\gamma^{\alpha \beta} \in 
    (V \otimes_\rmS V) \otimes V^*$ 
as the solution to the system
\begin{align*}
    U_{d b} T_c^{a b}
+   U_{c b} T_d^{a b}
=   R_{c d}^a
\end{align*}
constructed in Lemma \ref{lem:tensor-equation-2}.

\item
\emph{Iterative step ($N = 2, 3, \ldots $):}
We first define
    $R_{\gamma_1 \cdots \gamma_N}^{\alpha} 
    \in 
    V \otimes (V^*)^{\otimes_\rmS N}$
in terms of the lower order tensors
$T^{\alpha \beta}, 
T_{\gamma_1}^{\alpha \beta}, 
\ldots, 
T_{\gamma_1 \cdots \gamma_{N-2}}^{\alpha \beta}$
by 
\begin{align}
    \label{eq:R-def}
    R_{c_1 \cdots c_N}^{a}
        & :=
    \partial_{c_1} \cdots \partial_{c_N} 
    X^a 
    - 
    \sum_{ \substack{S \subseteq [N] \\ |S| < N-1 }}
    T_{c_S}^{a b} \ 
    \partial_{c_{[N] \setminus S}} 
    Y_b.
\end{align}
Here we use the shorthand notation
$
    T_{c_S} 
        :=
    T_{c_{i_1} \cdots c_{i_k}}
$
for $S := \{ i_1, \ldots, i_k\}$
with $i_\mu \neq i_\nu$ for $\mu \neq \nu$.
Then we define the tensor 
    $T_{\gamma_1 \cdots \gamma_{N-1}}^{\alpha \beta} 
        \in 
    V^{\otimes_\rmS (N-1)} \otimes (V^*)^{\otimes_\rmS 2}
    $ 
as the solution to the system
\begin{align*}
    \sum_{i = 1}^N 
    U_{c_i b} 
    T_{c_1 
    \cdots \widecheck{c_i} 
    \cdots c_N 
    }^{a b}
    =   R_{c_1 \cdots c_N}^a,
\end{align*}
constructed in Lemma \ref{lem:tensor-equation-N}.
\end{itemize}
\end{definition}

\begin{remark}
    The nondegeneracy assumption on the derivative 
        $\nabla_\alpha Y_\beta|_{\bar m}$
    is crucially used in this construction, 
    as the application of Lemmas \ref{lem:tensor-equation-2} and \ref{lem:tensor-equation-N} requires the invertibility of 
        $U_{\alpha \beta}$.
\end{remark}

Our next aim is to show that the power series
\begin{align*}
    g^{a b}|_x := 
    \sum_{N = 0}^\infty
        \frac{1}{N!}
            T^{a b}_{c_1 \cdots c_N}
            (x - \bar x)^{c_1} \cdots 
            (x - \bar x)^{c_N}.
\end{align*}
converges and defines a Riemannian metric in a neigbourhood of $\bar x$.
For this purpose we equip the spaces
    $ V^{\otimes k} \otimes (V^*)^{\otimes \ell}$ 
with the norm
\begin{align*}
    \big\| 
        W_{\alpha_1 \cdots \alpha_k}
            ^{\beta_1 \ldots \beta_\ell} 
    \big\|_\infty
        :=\max_{ a_1, \ldots, a_k, b_1, \ldots, b_\ell}
            \big| W_{a_1 \cdots a_k}^{b_1 \cdots b_\ell} \big|,
\end{align*}
where $W_{a_1 \cdots a_k}^{b_1 \cdots b_\ell}$ are the coordinates of 
    $W_{\alpha_1 \cdots \alpha_k}
        ^{\beta_1 \ldots \beta_\ell}$ 
in the standard basis of $\R^n$.
For brevity, let us write
\begin{align*}
    r_N := \| R_{\gamma_1 \cdots \gamma_N}^\alpha \|_\infty
    \tand
    t_N := \| T_{\gamma_1 \cdots \gamma_N}^{\alpha \beta} \|_\infty.
\end{align*}
We then obtain the following crucial growth bound on the power series coefficients.

\begin{lemma}
\label{lem:power-series-bound}
    There exist constants $C, p < \infty$ such that 
        $t_N \leq C N! p^N$ for all $N \geq 1$.
\end{lemma}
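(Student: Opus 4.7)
The plan is to combine three ingredients: the linear bound on $t_{N-1}$ in terms of $r_N$ coming from Lemma \ref{lem:tensor-equation-N}; the recursive expression of $r_N$ in terms of Taylor coefficients of $X^\alpha$, $Y_\beta$ and lower-order $t_k$ from Definition \ref{def:power-series}; and Cauchy estimates derived from the analyticity of $X^\alpha$ and $Y_\beta$.

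First, from the explicit formula \eqref{eq:T-def}, the triangle inequality, and the observation that each contracted index in a tensor contraction contributes at most a factor of $n$, one obtains
\begin{equation*}
    t_{N-1} \leq \frac{K}{N}\, r_N \qquad \text{for all } N \geq 2,
\end{equation*}
where $K$ is an explicit constant depending only on $n$, $\|U\|_\infty$, and $\|U^{-1}\|_\infty$, with $U_{\alpha\beta} := \nabla_\alpha Y_\beta|_{\bar m}$ invertible by Assumption \ref{ass:XY}; the analogous bound for $N=2$ follows from Lemma \ref{lem:tensor-equation-2}. The assumed convergence of the power series of $\tilde X^a$ and $\tilde Y_a$ around $\bar x$, combined with standard Cauchy estimates on a polydisc of radius $\rho > 0$ contained in the region of convergence, yields constants $A > 0$ and $B := \rho^{-1}$ such that
\begin{equation*}
    \|\partial^N X^a(\bar x)\|_\infty \leq A\, N!\, B^N
    \quad \text{and} \quad
    \|\partial^N Y_a(\bar x)\|_\infty \leq A\, N!\, B^N
\end{equation*}
for all $N \geq 0$.

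Second, applying $\|\cdot\|_\infty$ to the definition \eqref{eq:R-def} of $R^a_{c_1\cdots c_N}$ together with the above Cauchy bounds yields
\begin{equation*}
    r_N \leq n A\, N!\, B^N + n A \sum_{k=0}^{N-2} \binom{N}{k} (N-k)!\, B^{N-k}\, t_k.
\end{equation*}
Using the identity $\binom{N}{k}(N-k)! = N!/k!$ and the first step, this becomes the closed recursion
\begin{equation*}
    t_{N-1} \leq K n A (N-1)!\, B^N + K n A \sum_{k=0}^{N-2} \frac{(N-1)!}{k!}\, B^{N-k}\, t_k.
\end{equation*}

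Third, I would prove the ansatz $t_N \leq C\, N!\, p^N$ for all $N \geq 0$ by strong induction on $N$, fixing $C,p$ afterwards. Substituting the inductive hypothesis into the recursion, the factors $k!$ cancel and one obtains the geometric sum $\sum_{k=0}^{N-2}(p/B)^k$, which for $p > B$ is bounded by $p^{N-1}/\bigl(B^{N-2}(p-B)\bigr)$. Dividing through by $(N-1)!\, p^{N-1}$, the induction step reduces to the single inequality
\begin{equation*}
    K n A B \bigl(B/p\bigr)^{N-1} + \frac{K n A B^2}{p-B}\, C \leq C.
\end{equation*}
Choosing $p$ so large that $K n A B^2/(p-B) \leq 1/2$, and then $C \geq \max\bigl\{ 2 K n A B,\, \|\bar g\|_\infty \bigr\}$, this inequality holds for every $N \geq 2$, while the base case $N=0$ is covered by $t_0 = \|\bar g\|_\infty \leq C$; the induction closes.

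The main obstacle is arranging the combinatorics so that an analytic-type ansatz $t_N \leq C N!\, p^N$ propagates cleanly through the recursion. The binomial identity $\binom{N}{k}(N-k)! = N!/k!$ is the crucial calculation: it is precisely what converts the ansatz into a convergent geometric series in $p/B$ while pulling out the desired factor $(N-1)!$. An ansatz with sub-factorial growth cannot close, since the Cauchy bounds on $X^a$ and $Y_a$ introduce $N!$-type growth on the right-hand side that must be absorbed.
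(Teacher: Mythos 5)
Your proof is correct and follows essentially the same route as the paper's: Cauchy-type bounds on the Taylor coefficients of $X$ and $Y$ from analyticity, the linear estimate $t_{N-1}\leq (K/N)\,r_N$ from the explicit tensor formulas, the identity $\binom{N}{k}(N-k)! = N!/k!$ to close the recursion, and a strong induction on the ansatz $t_N \leq C\,N!\,p^N$ with $p$ chosen large enough to make the geometric tail contract. The only differences are cosmetic (you work with $r_N$ directly rather than $r_N/N!$, and your constant bookkeeping is a touch more explicit), so the argument is the same.
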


\begin{proof} 
    Recall that we work in a chart for which 
    Assumption \ref{ass:XY} holds. 
    Therefore, the real analyticity assumption 
    implies that there exist constants
        $C', q < \infty$
    such that 
    \begin{align}
        \label{eq:powerseries-bounds}
        \big| 
            \partial_{c_1} 
                \cdots \partial_{c_M} 
            \tilde X^a|_{\bar x} 
        \big|
             \leq 
        C' M! q^M
    \tand
        \big| 
            \partial_{c_1} 
                \cdots \partial_{c_M} 
            \tilde Y_a|_{\bar x} 
        \big|
            & \leq 
        C' M! q^M
    \end{align}
    for all $m \in \N$ and 
    all $c_1, \ldots, c_M \in \{1, \ldots, n\}$; 
    see, e.g., \cite[Proposition 2.2.10]{Krantz-Parks:2002}.

    Since $U_{\alpha \beta}$ is non-degenerate by Assumption \ref{ass:XY}, we have
    \begin{align*}
        K := \max \big\{ 
                \|U_{\alpha \beta}\|_\infty, 
                \|U^{\alpha \beta}\|_\infty 
                    \big\}
        < \infty.
    \end{align*}
    Using the bounds on the power series coefficients from 
    \eqref{eq:powerseries-bounds}
    and the definitions of $T$ and $R$ from 
    \eqref{eq:T-def} and \eqref{eq:R-def},
    we 
    obtain the following relations between the norms $r_k$ and $t_k$: 
    \begin{align*}
    \frac{r_N}{N!}
    & \leq
        C' q^N 
        +
        \frac{C' n}{N!} 
            \sum_{ \substack{S \subseteq [N] \\ 
                    |S| < N - 1 }}
        t_{|S|} q^{N-|S|} \big( N - |S|\big)!
    \\ & =
        C' q^N 
        +
        \frac{C' n}{N!} \sum_{k = 0}^{N - 2}
        \binom{N}{k}
        t_{k}  
       q^{N-k} ( N - k)!
    =
        C' q^N 
        \bigg(
        1 
        +
        n \sum_{k = 0}^{N - 2}
            \frac{t_{k} }{k! q^{k}}
       \bigg)
    \end{align*}
    and 
    \begin{align*}
        t_{N-1}
        \leq 
        \frac{1}{N}
            \Big(
                2 n K r_N 
                +
                K^3 n^3 r_N
            \Big)
        =: \frac{\tilde K}{N} r_N,
    \end{align*}
    where $\tilde K < \infty$ depends on $K$ and $n$.
    Using these estimates we shall now prove the desired result by induction.  
    
    We thus assume, for some $N \geq 0$, that the desired inequality 
        $t_k / k! \leq C  p^k$ 
    holds for all $k \leq N$, with suitable constants 
        $C, p < \infty$.
    We will now show that $t_{N+1} / (N+1)!  \leq C p^{N+1}$. 
    Indeed, using the inequalities above and the induction assumption, we obtain
    \begin{align*}
        \frac{t_{N + 1}}{(N+1)!}
            \leq
        \frac{\tilde K}{(N + 2)!} r_{N + 2}
            \leq
        C' \tilde K
            q^{N + 2}
                \bigg(
                    1 + n \sum_{k=0}^N   
                        \frac{t_k}{k! q^k}
                \bigg)
            \leq
        C' \tilde K
            q^{N + 2}
                \bigg(
                    1 + C n \sum_{k=0}^N   
                        \Big(\frac{p}{q}\Big)^k
                \bigg).
    \end{align*}
    Assuming, without loss of generality, that 
        $C \geq 1$ and $p > q$,
    this yields
    \begin{align*}
        \frac{t_{N + 1}}{(N+1)!}
            & \leq
            C p^{N+1} 
            C' \tilde K q
            \bigg(
                \Big(\frac{q}{p}\Big)^{N+1} 
                    + n \sum_{k=0}^N   
                    \Big(\frac{q}{p}\Big)^{N-k+1} 
            \bigg)
            \leq
            C p^{N+1} 
            C' \tilde K q
            \bigg(
                \frac{q}{p}
                    + \frac{n q}{p - q}
            \bigg).
    \end{align*}
   By choosing $p$ sufficiently large, the last term in brackets can be made smaller than $(C' \tilde K q)^{-1}$. This yields the result.
\end{proof}

\begin{corollary}
\label{cor: construction}
There exists a neigbourhood $U \ni \bar x$, such that
the power series
\begin{align}
\label{eq:power-series}
    g^{a b}|_x := 
    \sum_{N = 0}^\infty
        \frac{1}{N!}
            T^{a b}_{c_1 \cdots c_N}
            (x - \bar x)^{c_1} \cdots 
            (x - \bar x)^{c_N}
\end{align}
converges for all $x \in U$, 
its inverse defines a Riemannian metric, and the equality $X^\alpha|_x = g^{\alpha \beta} Y_\beta|_x$ holds for all $x \in U$.
\end{corollary}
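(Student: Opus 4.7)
The plan breaks into three tasks: deduce convergence from Lemma \ref{lem:power-series-bound}, read off positive definiteness of the inverse, and verify the identity $X^\alpha = g^{\alpha\beta}Y_\beta$ pointwise on the neighbourhood of convergence. For \emph{convergence}, Lemma \ref{lem:power-series-bound} delivers $t_N \leq C N! p^N$. Since there are at most $n^N$ multi-indices of length $N$, the $N$-th homogeneous term of the power series \eqref{eq:power-series} is bounded in absolute value by $C (n p \|x-\bar x\|_\infty)^N$, so I would take $U$ to be any polydisc centred at $\bar x$ of radius strictly less than $(np)^{-1}$, on which the series converges absolutely and uniformly and defines a real analytic map $x \mapsto g^{\alpha\beta}(x)$. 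Symmetry in the upper indices at every order is guaranteed by Lemmas \ref{lem:tensor-equation-2} and \ref{lem:tensor-equation-N}.

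For the \emph{metric}, the zeroth-order coefficient equals $T^{\alpha\beta}=\bar g^{\alpha\beta}$, which is a scalar product by hypothesis. Continuity of the real analytic map $x \mapsto g^{\alpha\beta}(x)$ then lets me shrink $U$ so that $g^{\alpha\beta}|_x$ stays positive definite throughout $U$; its pointwise matrix inverse $g_{\alpha\beta}|_x$ is the desired real analytic Riemannian metric on $U$.

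The core step is verifying $X^a = g^{ab} Y_b$ on $U$. By construction, the tensor equations \eqref{eq:tensor-N} hold at $\bar x$ for every $N \geq 1$; combining them with the definition \eqref{eq:R-def} of $R$ and reabsorbing the $|S|=N-1$ summands into the left-hand side rearranges \eqref{eq:tensor-N} into
\[
    \partial_{c_1}\cdots \partial_{c_N} X^a\big|_{\bar x}
    =
    \sum_{S \subseteq [N]} T^{ab}_{c_S} \; \partial_{c_{[N]\setminus S}} Y_b \big|_{\bar x},
\]
where the $S = [N]$ term is added for free since $Y_b|_{\bar x} = 0$. By the Leibniz rule, the right-hand side is exactly $\partial_{c_1}\cdots\partial_{c_N}(g^{ab}Y_b)|_{\bar x}$, provided that $T^{ab}_{c_1\cdots c_N}$ may be read as $\partial_{c_1}\cdots\partial_{c_N} g^{ab}|_{\bar x}$, which is legitimate thanks to the symmetry of $T^{ab}_{c_1\cdots c_N}$ in its lower indices supplied by Lemma \ref{lem:tensor-equation-N}. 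Consequently the real analytic function $F^a(x) := X^a(x) - g^{ab}(x) Y_b(x)$ has vanishing Taylor expansion at $\bar x$; since $X^a$ and $Y_b$ are real analytic by Assumption \ref{ass:XY}\emph{(ii)} and $g^{ab}$ is real analytic on $U$ as a convergent power series, $F^a$ is real analytic on $U$ and therefore vanishes identically there.

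The main obstacle is the bookkeeping that closes the last step: one must ensure that the tensor equations \eqref{eq:tensor-N}, obtained by formally differentiating the target identity at a critical point, really encode \emph{all} the Taylor information of $X^a - g^{ab}Y_b$ at $\bar x$, not merely some symmetrised shadow of it. This depends crucially on the symmetry of the constructed $T^{ab}_{c_1\cdots c_N}$ in its lower indices, which is exactly what the explicit solution formula in Lemma \ref{lem:tensor-equation-N} provides, so that the coefficients may be consistently identified with the partial derivatives of the single scalar-valued power series \eqref{eq:power-series}.
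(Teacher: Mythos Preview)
Your proof is correct and follows essentially the same route as the paper: convergence from the factorial bound of Lemma~\ref{lem:power-series-bound}, positive definiteness by continuity from the initial value $\bar g^{\alpha\beta}$, and verification of $X^a = g^{ab}Y_b$ by matching all Taylor coefficients at $\bar x$ via Leibniz's rule and the tensor equations. Your explicit emphasis on the lower-index symmetry of $T^{ab}_{c_1\cdots c_N}$ (needed to identify these tensors with the partial derivatives of the power series) is a point the paper uses only implicitly.
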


\begin{proof}
The definitions yield
\begin{align*}
        \big|    
            T^{a b}_{c_1 \cdots c_N}
            (x - \bar x)^{c_1} \cdots 
            (x - \bar x)^{c_N}
        \big|
    \leq 
        n^N
        \| 
            T^{\alpha \beta}_{\gamma_1 \cdots \gamma_N} 
        \|_\infty
    \| x - \bar x \|_1^N,
\end{align*}
where $\| y \|_1 := \sum_a | y^a |$ for $y \in V^*$.
Since 
    $\| 
        T_{\gamma_1 \cdots \gamma_N}^{\alpha \beta} 
    \|_\infty 
    \leq C N! p^N$
by Lemma \ref{lem:power-series-bound}, 
we infer that the power series \eqref{eq:power-series} converges for $\| x - \bar x\|_1 < 1/(p n)$.

To verify that $g^{\alpha \beta}$ defines a metric, 
note first that 
    $g^{a b} = g^{b a}$ 
by construction. 
To show that $g^{\alpha \beta}$ is positive definite when 
    $x$ is close enough to $\bar x$, 
it suffices to note that 
    $g^{\alpha \beta}|_{\bar x} = \bar g^{\alpha \beta}$ 
is positive definite
and the map $x \mapsto g^{\alpha \beta}|_{x}$ is continuous. 

Since the tensor fields 
    $X^\alpha$, $Y_\beta$, and $g^{\alpha \beta}$ 
are given by convergent power series, and since 
    $X^\alpha|_{\bar x}  
        = g^{\alpha \beta} Y_\beta |_{\bar x}
    $ 
by assumption, it is enough to verify that all derivatives at $\bar x$ coincide, i.e.,
\begin{align*}
    \partial_{c_1} \cdots \partial_{c_N} 
        X^a 
    =
    \partial_{c_1} \cdots \partial_{c_N} 
        (g^{a b} Y_b )
\end{align*}
for all  $N\in \N$ and all 
    $c_1, \ldots, c_N \in \{ 1, \ldots, n \}$. 
To prove this identity,
we use the notation from Definition \ref{def:power-series},
to obtain at 
$x = \bar x$,
\begin{equation}
\begin{aligned}
    \label{eq:derivatives-identity}
    \partial_{c_1} \cdots \partial_{c_N} 
        (g^{a b} Y_b )
    &
     =
    \sum_{ S \subseteq [N] }
    \partial_{c_S}
        g^{a b}
    \partial_{c_{[N] \setminus S}} 
        Y_b
    \\ &
    =
    \Big(\partial_{c_1} \cdots \partial_{c_N} 
    g^{a b}
    \Big) Y_b
    +
    \sum_{i=1}^N
        \Big(
        \partial_{c_1} \cdots 
        \partial_{c_{i-1}} 
        \partial_{c_{i+1}} \cdots \partial_{c_N}     
        g^{a b}
        \Big)
        \partial_{c_i} Y_b 
\\ & \qquad
    +
    \sum_{ \substack{S \subseteq [N] \\ |S| < N-1 }}
    \partial_{c_S}
    g^{a b}
    \partial_{c_{[N] \setminus S}} 
    Y_b
    \\ & = 
    0 + 
    \sum_{i = 1}^N 
    U_{c_i b} 
    T_{c_1 
    \cdots \widecheck{c_i} 
    \cdots c_N 
    }^{a b}
    +
    \Big(
        \partial_{c_1} \cdots \partial_{c_N} X^a 
        - 
        R_{c_1 \cdots c_N}^a 
    \Big)
    \\ & =  \partial_{c_1} \cdots \partial_{c_N} X^a.
\end{aligned}    
\end{equation}
To obtain the third equality, we use that $\bar x$ is a critical point,
together with the definitions of $R$, $T$, and $U$ in Definition \ref{def:power-series}.
In the final step we use the tensor equation \eqref{eq:first-multi}.
\end{proof}

The proof of Theorem \ref{Theorem: Local Crit} is now complete, 
as the metric $g^{\alpha \beta}$ constructed above can be pushed back to $M$ using the chart $\phi$.

\section{Construction of a
    metric of class 
    \texorpdfstring{$C^k$}{Ck}
    }
\label{sec:continuous}

Let 
    $X^\alpha$ 
be a vector field and 
    $Y_\beta$ 
be a co-vector field on a smooth manifold $M$.
As before, let 
    $\sfN_Y := \{ m \in M \ : \ Y|_m = 0 \}$ 
be the set of critical points of $Y$. 
In this section we weaken the regularity assumptions on $X$ and $Y$. 
In Proposition \ref{prop:cont} these fields are assumed to be merely differentiable.  
Subsequently we provide the proof of Theorem \ref{thm:Ck}, which deals with fields of class $C^{k+1}$ for $k \in \N$.

The following result, which does not require an iterative scheme, is known in the special case where $Y_\beta$ is the derivative of a scalar function 
\cite{Barta-Chill-Fasangova:2012, Bily:2014}.
In this setting, the existence of a metric with the desired property away from critical points is proved in \cite{Barta-Chill-Fasangova:2012}.
The construction of the metric below is taken from there. 
It relies on the unique decomposition of vector fields into a component parallel to $X$ and a component annihilating $Y$, which only works away from critical points.
The proof of the existence of a continuous extension to all of $M$ is adapted from \cite{Bily:2014}.

\begin{proposition}[Existence of a continuous metric]
    \label{prop:cont}
    Let 
        $X^\alpha$ 
    and 
        $Y_\beta$
    be differentiable fields on $M$
    and suppose that the bilinear form 
        $\nabla_\alpha Y_\beta|_m$ 
    is non-degenerate for all $m \in \sfN_Y$ for some 
    (equivalently, any) connection $\nabla$. 
    Suppose that the following conditions hold:
    \begin{enumerate}[$(i)$]
    \item $X^\alpha Y_\alpha|_m>0$ 
            for all $m \in M \setminus \sfN_Y$;
    \item $X^\alpha|_m=0$ 
            for all $m \in \sfN_Y$;
    \item For all $m \in \sfN_Y$  
            there exists a scalar product 
            $\bar g_m \in T_m M \otimes_{\rm S} T_m M$, 
        such that
        \begin{align*}
            \nabla_\alpha Y_\gamma|_m
                =
                \bar g_{\beta \gamma} 
                    \nabla_\alpha X^\beta|_m,
        \end{align*}
    where $\nabla_\alpha$ is an arbitrary connection.
    \end{enumerate}
    Then there exists a continuous metric $g_{\alpha \beta}$ 
    on $M$
    satisfying  
    $Y_\beta = g_{\alpha \beta} X^\alpha$. 
    \end{proposition}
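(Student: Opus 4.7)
Following the paper's hint, I would first construct $g$ explicitly on the open set $M \setminus \sfN_Y$ via the Barta--Chill--Fasangova ansatz, then choose the auxiliary metric cleverly so that this $g$ extends continuously across $\sfN_Y$ (the Bily step). Non-degeneracy of $\nabla Y$ at critical points forces $\sfN_Y$ to be a closed discrete subset of $M$ (by the inverse function theorem), which makes the global bookkeeping straightforward.

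\textbf{Construction.} Fix any auxiliary Riemannian metric $h_{\alpha\beta}$ on $M$ and set, on $M \setminus \sfN_Y$,
\begin{align*}
  g_{\alpha\beta}
    := \frac{Y_\alpha Y_\beta}{X^\gamma Y_\gamma}
    + h_{\alpha\beta}
    - \frac{h_{\alpha\mu} X^\mu\, h_{\beta\nu} X^\nu}{h_{\rho\sigma} X^\rho X^\sigma}.
\end{align*}
A direct verification shows that $g$ is symmetric, obeys $g_{\alpha\beta} X^\alpha = Y_\beta$, and is positive definite: the first term is positive semidefinite with kernel $\{V : Y_\alpha V^\alpha = 0\}$, the sum of the last two terms is the $h$-orthogonal projection onto $(\R X)^{\perp_h}$, and these two kernels intersect trivially thanks to $(i)$. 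To set up the extension, fix for each $\bar m \in \sfN_Y$ a scalar product $\bar g|_{\bar m}$ as provided by $(iii)$, and use a partition of unity (together with charts around the isolated points of $\sfN_Y$) to build the global $h$ so that $h_{\alpha\beta}|_{\bar m} = \bar g_{\alpha\beta}|_{\bar m}$ at every $\bar m \in \sfN_Y$. Finally, put $g|_{\bar m} := \bar g|_{\bar m}$ at critical points.

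\textbf{Continuity.} Off $\sfN_Y$ continuity is clear from the formula. At $\bar m \in \sfN_Y$, work in a chart where $\bar m = 0$ and write $X^a(x) = A^a{}_c x^c + o(|x|)$, $Y_a(x) = B_{ac} x^c + o(|x|)$ with $A = \nabla X|_{\bar m}$ and $B = \nabla Y|_{\bar m}$. Condition $(iii)$ gives $B = \bar g A$, so $A$ is invertible and $X^\gamma Y_\gamma \geq c|x|^2$ near $0$. Setting $x = rv$ with $|v|=1$ and expanding, one finds, uniformly in $v$,
\begin{align*}
  \frac{Y_\alpha Y_\beta}{X^\gamma Y_\gamma}\bigg|_{rv}
    = \frac{(Bv)_\alpha(Bv)_\beta}{v^\top A^\top B v} + o(1),
  \qquad
  \frac{h_{\alpha\mu}X^\mu\, h_{\beta\nu}X^\nu}{h_{\rho\sigma}X^\rho X^\sigma}\bigg|_{rv}
    = \frac{(\bar g A v)_\alpha (\bar g A v)_\beta}{v^\top A^\top \bar g A v} + o(1),
\end{align*}
using $h(0) = \bar g$ and the $o(|x|)$ control on the nonlinear parts of $X$, $Y$, $h$. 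Since $B = \bar g A$, the two rank-one terms coincide and cancel, and combined with $h(x) \to \bar g$ one concludes $g_{\alpha\beta}(x) \to \bar g_{\alpha\beta}|_{\bar m}$ as $x \to \bar m$.

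\textbf{Main obstacle.} The delicate point is that each rank-one piece in the formula has a \emph{direction-dependent} limit at $\bar m$; only the specific choice $h|_{\bar m} = \bar g|_{\bar m}$ makes these two limits coincide so that they cancel to leave a direction-independent value. Arranging this simultaneously at all critical points is what forces the partition-of-unity construction of $h$ above, and it is precisely where the isolation of $\sfN_Y$ (hence the non-degeneracy hypothesis on $\nabla Y$) is used in an essential way.
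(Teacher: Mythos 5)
Your argument is correct, and it reaches the same goal as the paper's proof with the same overall strategy (explicit formula on $M\setminus\sfN_Y$ built from a background metric $h$ matching $\bar g$ at critical points, then a continuity check via a first-order expansion using $(iii)$), but the explicit formula differs. The paper follows B\'arta--Chill--Fa\v{s}angov\'a and uses the \emph{non-orthogonal} splitting $T_m M = Y_m^\perp \oplus \Span\{X_m\}$, writing $Z = Z^{(0)} + Z^{(1)}$ with $Z^{(1)} = \tfrac{\langle Z, Y\rangle}{\langle X, Y\rangle}X$ and setting $\tilde g(Z,W) = g(Z^{(0)},W^{(0)}) + \tfrac{\langle Z,Y\rangle\langle W,Y\rangle}{\langle X,Y\rangle}$; your ansatz instead uses the \emph{$h$-orthogonal} splitting $T_m M = X^{\perp_h}\oplus \Span\{X\}$, giving $g = \tfrac{Y\otimes Y}{\langle X,Y\rangle} + h - \tfrac{(hX)\otimes(hX)}{h(X,X)}$. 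Both are valid positive-definite solutions of $g_{\alpha\beta}X^\alpha = Y_\beta$ away from $\sfN_Y$. The continuity proofs also differ in presentation: the paper introduces the deficit $R_\beta := Y_\beta - g_{\alpha\beta}X^\alpha$, rewrites $\tilde g - g$ entirely in terms of $R$, and shows $R_b(x) = o(|x-\bar x|)$ from $(iii)$ and continuity of $g$, whence the correction terms are $o(1)$; you instead track the two rank-one direction-dependent pieces and show they converge to the same limit $\tfrac{(\bar g A v)\otimes(\bar g A v)}{v^\top A^\top \bar g A v}$ (using $B = \bar g A$), so their difference is $o(1)$. This is a genuinely equivalent but more hands-on computation. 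One small presentational imprecision: the third term in your formula is not itself "the $h$-orthogonal projection onto $(\R X)^{\perp_h}$"; rather $h_{\alpha\beta} - h_{\alpha\mu}X^\mu h_{\beta\nu}X^\nu / h(X,X)$ is the bilinear form $(V,W)\mapsto h(\pi V,\pi W)$ where $\pi$ is that projection — but your subsequent use of it (its kernel is $\R X$, it is positive semidefinite) is correct. Your remark that non-degeneracy of $\nabla Y$ makes $\sfN_Y$ discrete, allowing a partition-of-unity construction of $h$, is a welcome explicit step that the paper leaves implicit.
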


    \begin{proof}
Let $m \in M \setminus \sfN_Y$ be a  non-critical point, hence $X|_m \neq 0$ and $Y|_m \neq 0$ by $(ii)$.
The assumption $(i)$ implies that we have the direct sum decomposition
    $T_m M = Y_m^\perp \oplus \Span\{X_m\}$,
hence every vector $Z \in T_m M$ can be uniquely decomposed as 
\begin{align*}
    Z = Z^{(0)} + Z^{(1)}, 
    \quad \text{with} \quad
    Z^{(0)} \in Y_m^\perp
        \tand
    Z^{(1)} := 
        \frac{\ip{Z,Y_m}}{\ip{X_m, Y_m}} X_m 
            \in 
        \Span\{X_m\}.
\end{align*}
Let $g = g_{\alpha \beta}$ be an arbitrary continuous metric on $M$ satisfying $g|_m = \bar g_m$ at all critical points $m \in \sfN_Y$.
Following \cite{Barta-Chill-Fasangova:2012}, we construct a perturbation of $\tilde g$ as follows:
\begin{align}
    \tilde g(Z,W)
        :=
    g(Z^{(0)},W^{(0)})  
        +
        \frac{
            \ip{Z^{(1)},Y}
            \;
            \ip{W^{(1)},Y}
            }
        {\ip{X, Y}},
\end{align}
for $Z, W \in \Gamma(TM)$.
In view of $(i)$, it readily follows that $g$ defines a continuous metric on $M \setminus \sfN_Y$. It remains to show that $\tilde g$ can be continuously extended to all of $M$.

It will be convenient to use abstract index notation. 
Taking into account that 
    $\ip{Z^{(1)},Y} = \ip{Z,Y}$ 
and $\ip{W^{(1)},Y} = \ip{W,Y}$,
it follows from the definition that
\begin{align*}
    \tilde g_{\alpha \beta}
    & = 
    g_{\alpha \beta}
    +
        \frac{Y_\alpha Y_\beta
            - g_{\alpha \gamma} 
                    X^\gamma Y_\beta
              - g_{\gamma \beta} 
                    X^\gamma Y_\alpha}
             {X^\delta Y_\delta}
    + 
    \frac{g_{\gamma \delta}
    X^\gamma X^\delta Y_\alpha Y_\beta}
         {(X^\delta Y_\delta)^2}.
\end{align*}
Introducing the deficit
    $R_\beta := Y_\beta 
        - g_{\alpha \beta} X^\alpha$,
we can write
\begin{align}
    \label{eq:tilde-g}
    \tilde g_{\alpha \beta}
    & = 
    g_{\alpha \beta}
    + 
        \frac{R_\alpha Y_\beta
        + R_\beta Y_\alpha}
             {X^\delta Y_\delta}
    - 
    \frac{R_\gamma X^\gamma  Y_\alpha Y_\beta}
         {(X^\delta Y_\delta)^2}.
\end{align} 

Fix a critical point $\bar m \in \sfN_Y$.
Using assumptions $(ii)$ and $(iii)$ we shall show that 
    $\tilde g|_m \to g|_{\bar m}$ 
as $m \to \bar m$, following the arguments in \cite{Bily:2014}.
Using the notation from Section \ref{sec:critical}, we shall perform a Taylor expansion of the terms in \eqref{eq:tilde-g} in a fixed chart, where $\bar m \in M$ corresponds to $\bar x \in \R^n$. 
As $X$ and $Y$ are differentiable, and $\bar x$ is a critical point, it follows from $(ii)$ that 
\begin{align}
    \label{eq:XY-expansions}
    X^a(x) 
        = 
    \nabla_c X^a(\bar x) (x - \bar x)^c 
        + o \big(|x-\bar x|\big)
    \tand
    Y_b(x) 
        = 
    \nabla_c Y_b(\bar x) (x - \bar x)^c 
        + o \big(|x-\bar x|\big).
\end{align}
Since $\bar g_{a b}(\bar x)$ is a scalar product, there exists $\kappa > 0$ such that
$\bar g_{a b}(\bar x) v^a v^b \geq \kappa |v|^2$ 
for all $v \in \R^n$.
Furthermore, $\nabla_b X^a$ is non-degenerate by assumption $(iii)$ and the non-degeneracy assumption on $\nabla_b Y^a$. 
Therefore, 
    $|\nabla_b X^a v |^2 
        \geq 
    \tilde \kappa | v |^2$
for some constant $\tilde \kappa > 0$.
Using these inequalities, together with $(iii)$, yields
\begin{equation}
    \begin{aligned}
\label{eq:XY-eq}
    X^a Y_a(x)
        & = 
            \nabla_b X^a(\bar x)
            \nabla_c Y_a(\bar x)
                (x - \bar x)^b 
                (x - \bar x)^c  
            + o \big(|x-\bar x|^2\big)
        \\ & = 
            \bar g_{a d}(\bar x) 
            \nabla_b X^a(\bar x)
            \nabla_c X^d(\bar x)
                (x - \bar x)^b 
                (x - \bar x)^c  
            + o \big(|x-\bar x|^2\big)
      \\ & \geq \kappa
            \big|
                \nabla_b X^a(\bar x)
                (x - \bar x)^b 
            \big|^2
            + o \big(|x-\bar x|^2\big)
      \\ &  \geq \kappa \tilde \kappa
            |
                x - \bar x
            |^2
            + o \big(|x-\bar x|^2\big),
    \end{aligned}
\end{equation}
which bounds the denominator in \eqref{eq:tilde-g} from below.
As for the terms in the numerator, we first note that
    $X^a(x) = O\big(|x-\bar x|\big)$ 
        and 
    $Y_b(x) = O\big(|x-\bar x|\big)$.
These bounds trivially imply that 
    $R_b(x) = O\big(|x-\bar x|\big)$ 
as well,
but this is not sufficient.
The key point of the proof is that this bound can be improved.
Indeed, using $(iii)$ and the continuity of $g$ at $\bar x$, we obtain
\begin{equation}
    \begin{aligned}
        \label{eq:R-eq}
    R_b(x) 
        & = 
    \big(Y_b - g_{a b} X^a\big)(x)
        \\& =
    \nabla_c Y_b(\bar x) (x - \bar x)^c 
    - g_{a b}(x)
    \nabla_c X^a(\bar x) (x - \bar x)^c 
        + o (|x-\bar x|)
        \\& =
    \big(\bar g_{a b}(x) 
        - g_{a b}(x)
       \big) 
    \nabla_c X^a(\bar x) (x - \bar x)^c 
            + o (|x-\bar x|)
    \\& =         o \big(|x-\bar x|\big).
\end{aligned}
\end{equation}
It now follows from \eqref{eq:XY-eq} and \eqref{eq:R-eq} together with the bounds on $X$ and $Y$, that the fractions in \eqref{eq:tilde-g} vanish as $x \to \bar x$.
This shows that 
$\tilde g$ can be continuously extended to $M$ by setting $\tilde g_{a b}(\bar x) := \bar g_{a b}(\bar x)$.
\end{proof}

While the metric $\widetilde{g}$ constructed in the proof of Proposition \ref{prop:cont} is continuous, it is \emph{not} in general differentiable, 
even if the background metric 
    $g_{\alpha \beta}$ 
and the vector fields
    $X^\alpha$ and $Y_\beta$ 
are smooth. 
Here is an explicit counterexample.

\begin{example}
    \label{ex:nondifferentiable}
Let $M$ be the open unit ball in $\R^2$.
We work in cartesian coordinates.
Set $X(x) = Y(x) = x$ for $x \in M$,
and consider the background metric 
    $g_{\alpha \beta}$ defined by 
\begin{align*}
g_{a b}(x):= 
    \begin{bmatrix}
       1 +x_2  & 0 \\ 
                0   & 1
    \end{bmatrix}
\end{align*}
for $x = (x_1, x_2) \in M$.
Since $g$ is smooth and $g|_0 = I$,
it is a valid background metric. 
An explicit computation yields 
\begin{align*}
    \widetilde{g}_{1 1}(x) 
    = 1 + \frac{x_2^5}{(x_1^2 + x_2^2)^2}
    \tand 
    \nabla_1 \widetilde{g}_{1 1}(x) 
    = 
    -4\frac{ x_1  x_2^5}{(x_1^2 + x_2^2)^3}.
\end{align*}
The latter is a non-constant homogeneous function and as such discontinuous at 
    $x = 0$, 
thus
    $\tilde g_{\alpha \beta}$ does not belong to $C^1$. 
\end{example}

Theorem \ref{thm:Ck} shows that 
better regularity properties can be obtained
by a careful choice
of the background metric $g_{\alpha \beta}$. 
In the following proof we define $g_{\alpha \beta}$
by making use of the construction in Section \ref{sec:critical}, 
which yields improved bounds on the deficit 
    $R_\beta := Y_\beta 
    - g_{\alpha \beta} X^\alpha$
around critical points.
This allows us to construct a metric 
    $\widetilde{g}_{\alpha \beta}$ 
of class $C^k$
whenever $X^\alpha$ and $Y_\beta$ are of class $C^{k+1}$.

\begin{proof}[Proof of Theorem \ref{thm:Ck}]
First we note that the necessity of conditions $(i)$ and $(ii)$ was already observed in the introduction. 
The necessity of $(iii)$ follows, even when $g$ is assumed to be merely continuous, 
from the expansions for $X$ and $Y$ in \eqref{eq:XY-expansions} and the expansion
    $g(x) = g(\bar x) + o\big(|x-\bar x|\big)$
in local coordinates around a critical point $\bar x$.
Therefore it remains to show that these three conditions are also sufficient.

As in  Proposition \ref{prop:cont}, 
we construct a metric 
of the form \eqref{eq:tilde-g}
on the non-critical set 
    $M \setminus \sfN_Y$:
\begin{align}
    \label{eq:g-formula}
    \tilde g_{\alpha \beta}
    & = 
    g_{\alpha \beta}
    + 
        \frac{R_\alpha Y_\beta
        + R_\beta Y_\alpha}
             {X^\delta Y_\delta}
    - 
    \frac{R_\gamma X^\gamma  Y_\alpha Y_\beta}
         {(X^\delta Y_\delta)^2},
\end{align}
where $R_\beta := Y_\beta 
- g_{\alpha \beta} X^\alpha$ denotes the deficit, 
and $g_{\alpha \beta}$ 
is a background metric on $M$
that will be carefully chosen below.
As noted before, it is immediate to verify that the desired identity 
    $Y_\beta = \widetilde{g}_{\alpha \beta} X^\alpha$ 
holds on $M \setminus \sfN_Y$.  

\smallskip

\emph{Construction of the background metric.} \
Fix $\bar{m} \in \sfN_Y$. 
As in Section \ref{sec:critical} we work in a fixed coordinate chart 
where $\bar{m}$ corresponds to $\bar{x} \in \R^n$.
In these local coordinates we then define the
background metric by
\begin{align*}
    g_{\bar{m}}^{a b}(x) 
        := 
    \sum_{N = 0}^k
        \frac{1}{N!}
            T^{a b}_{c_1 \cdots c_N}
            (x - \bar{x})^{c_1} \cdots 
            (x - \bar{x})^{c_N}
\end{align*}
for $x$ in a small neigbourhood around $\bar{x}$.
It is crucial that we use the tensors 
    $T^{\alpha \beta}_{\gamma_1 \cdots \gamma_N}$ 
that were constructed in Definition \ref{def:power-series}.
Note that $T^{\alpha \beta}_{\gamma_1 \cdots \gamma_N}$ is indeed well defined for $N\leq k$ due to our assumption that $X^\alpha$ and $Y_\beta$ are $k+1$ times continuously differentiable. 
As 
    $T^{\alpha \beta}$
is positive definite, 
it follows that 
    $(g_{\bar m})_{\alpha \beta}$
defines a metric in a neighbourhood of $\bar x$.

For each cricitical point $\bar m$, this construction yields a Riemannian metric in an open neighbourhood $\mathcal{V}_{\bar{m}}$ of $\bar m$. 
By the non-degeneracy assumption, we may assume that the sets $\{ \mathcal{V}_{\bar{m}}\}_{\bar m \in \sfN_Y}$ are pairwise disjoint. 
Let $\mathcal{U}_{\bar{m}}$ be an open neighbourhood of $\bar m$ satisfying 
    $\overline{\mathcal{U}_{\bar{m}}} 
        \subseteq \mathcal{V}_{\bar{m}}$ 
and let $f_{\bar m} : M \to [0,1]$ be a smooth function on $M$ satisfying 
    $f_{\bar m}|_{\mathcal{U}_{\bar{m}}} = 1$ and 
    $f_{\bar m}|_{M\setminus \mathcal{V}_{\bar{m}}} = 0$. 
Using an arbitrary metric $(g_*)_{\alpha \beta}$ on $M$ and the function 
    $\tilde f := 1 - \sum_{\bar m\in \sfN_Y} f_{\bar m}$, 
we define
\begin{align}
    g_{\alpha \beta} 
        :=
    \sum_{\bar m\in \sfN_Y} 
        f_{\bar m}(g_{\bar{m}})_{\alpha \beta} 
        + \tilde f \tilde g_{\alpha \beta},
\end{align}
which yields a $C^k$ metric 
    $g_{\alpha \beta}$ on $M$ 
satisfying 
    $g_{\alpha \beta}|_m = (g_{\bar{m}})_{\alpha \beta}|_m$ 
for all $\bar{m}\in \sfN_Y$ and $m\in \mathcal{U}_{\bar{m}}$. 

The crucial property of this background metric $g$,
which will be used below, 
is that 
the deficit 
    $R_\beta := Y_\beta 
        - g_{\alpha \beta} X^\alpha$ 
satisfies 
\begin{align}
    \label{eq:crucial}
    \partial_{c_1}\cdots \partial_{c_p}
        R_\beta|_{\bar{m}}
        =
    0
\end{align}
for all $\bar{m} \in \sfN_Y$ and $p\leq k+1$. 
This follows from the definition of the tensors 
$T^{a b}_{c_1 \cdots c_N}$
using the computation
\eqref{eq:derivatives-identity}.

\smallskip

\emph{Differentiability of the metric.} \
To verify that 
    $\tilde g_{\alpha \beta}$ is $k$ times continuously differentiable, 
we will show that the partial derivatives
\begin{align*}
    U_{\alpha \beta\, c_1\dots c_p}
        :=
    \partial_{c_1} \cdots \partial_{c_p}
            \frac{R_\alpha Y_\beta}
             {X^\delta Y_\delta} 
             \tand 
    V_{\alpha \beta\, c_1\dots c_p}
        := 
    \partial_{c_1} \cdots \partial_{c_p}
                \frac{R_\gamma X^\gamma 
                        Y_\alpha Y_\beta}
                    {(X^\delta Y_\delta)^2}
\end{align*}
can be continuously extended from $M \setminus \sfN_Y$ 
to all of $M$ for $p\leq k$. 
In view of \eqref{eq:g-formula} this yields the desired result.

We use the notation 
from Definition \ref{def:power-series}, thus
$\partial_{c_S} = \partial_{c_{i_1}} \cdots \partial_{c_{i_q}}$
for $S = \{ i_1, \ldots, i_q \} \subseteq \{1, \ldots, p\}$ with $i_\mu \neq i_\nu$ for $\mu \neq \nu$.  
With this notation we have
\begin{align*}
& U_{\alpha \beta\, c_1\dots c_p} 
    =  
    \sum_{\ell=0}^p 
    \sum_{\{S_1,\dots,S_\ell,A,B\}
            \in \mathcal{X}_p} 
        \frac{(-1)^\ell \ell!}
            {\left(X^\delta Y_\delta\right)^{\ell+1}} \partial_{c_{S_1}}
                  \! 
                \big(X^\delta Y_\delta\big)
                    \cdots \partial_{c_{S_\ell}}
                  \! 
            \big(X^\delta Y_\delta\big)
                \,
            \partial_{c_{A}} 
                R_\alpha
                \,
            \partial_{c_{B}}
                Y_\beta,    \\
&  V_{\alpha \beta\, c_1\dots c_p} 
        = 
    \sum_{\ell=0}^p
        \sum_{\{S_1,\dots,S_\ell,A,B\}
                \in \mathcal{X}_p}
             \!  
        \frac{(-1)^\ell \ell!}
            {\big(X^\delta Y_\delta\big)^{2(\ell+1)}}\partial_{c_{S_1}}
                   \! 
            \big(X^\delta Y_\delta\big)^2
            \cdots  
            \partial_{c_{S_\ell}}
                  \!
            \big(X^\delta Y_\delta\big)^2
                \partial_{c_{A}} 
                 \!  
                R_\gamma
            \,
            \partial_{c_{B}}
                 \! 
            \big(X^\gamma  Y_\alpha Y_\beta\big),
\end{align*}         
where $\mathcal{X}_p$ is the collection of all possible partitions of $\{1,\dots,p\}$. 

Let us fix a critical point 
    $\bar m \in \sfN_Y$ 
and let 
    $\bar{x}$ 
be the corresponding point in $\R^n$. 
Recall from \eqref{eq:XY-eq} that
\begin{align*}
    \big(X^\delta Y_\delta \big)^{-1}(x) 
        = 
        O \big(|x-\bar x|^{-2}\big).
\end{align*}
Furthermore, since
    $X^\alpha|_{\bar{x}} = 0$ 
and 
    $Y_\alpha|_{\bar{x}} = 0$,
Taylor's formula yields, for any $S \subseteq \{1, \ldots, p\}$,
\begin{align*}
    \partial_{c_S}
    \big(X^\delta Y_\delta\big)(x)
        =
    O \big( |x-\bar x|^{(2 - |S|)_+} \big),
    & & &
    \partial_{c_S}
        Y_\beta(x) 
    = O \big( |x-\bar x|^{(1 - |S|)_+} \big),\\
\partial_{c_S}
\big(X^\delta Y_\delta\big)^2(x)
= O \big(|x-\bar x|^{(4-|S|)_+}\big),
& & &
    \partial_{c_S}
            \big( X^\gamma Y_\alpha Y_\beta \big)(x)
        = O \big( |x-\bar x|^{(3 - |S|)_+} \big)
    .
\end{align*}
To estimate $\partial_{c_S} R_\alpha(x)$ we use the crucial point, observed in \eqref{eq:crucial}, that our background metric is constructed so that 
$\partial_{c_S}
        R_\beta(\bar x)
        =
    0$ 
when
$|S| \leq k+1$.
This ensures that
\begin{align*}
    \partial_{c_S}
        R_\alpha(x)
    = O \big(|x-\bar x|^{k+2-|S|}\big). 
\end{align*}
Combining these bounds, we estimate the right-hand sides of 
    $U_{\alpha \beta\, c_1\dots c_p}$ 
and 
    $V_{\alpha \beta\, c_1\dots c_p}$ 
as follows:
\begin{align*}
    \frac{1}
            {\left(X^\delta Y_\delta\right)^{\ell+1}} \partial_{c_{S_1}}
                  \! 
                \big(X^\delta Y_\delta\big)
                    \cdots \partial_{c_{S_\ell}}
                  \! 
            \big(X^\delta Y_\delta\big)
                \,
            \partial_{c_{A}} 
                R_\alpha
                \,
            \partial_{c_{B}}
                Y_\beta
    = O \big(|x-\bar x|^u\big),
    \\
    \frac{1} 
    {\left(X^\delta Y_\delta\right)^{2(\ell+1)}}
    \,
    \partial_{c_{S_1}} 
    \! 
    \big( X^\delta Y_\delta \big)^2 
        \cdots  
    \partial_{c_{S_\ell}}
    \!
    \big( X^\delta Y_\delta \big)^2
    \,
    \partial_{c_{A}} \! R_\gamma 
    \,
    \partial_{c_{B}}
    \! 
    \big( X^\gamma  Y_\alpha Y_\beta \big)
     =
    O\big( 
            |x-\bar x|^{
            v
            }
    \big),
\end{align*}
where the exponents $u$ and $v$ satisfy 
\begin{align*}
    u & = 
    -2 (\ell + 1)
    + \big(  2 - |S_1| \big)_+
    + \ldots
    + \big(  2 - |S_\ell| \big)_+
    + \big(  k + 2 - |S_A| \big)
    + \big(  1 - |S_B| \big)_+ \,, \\
    v  & = 
            - 4 (\ell  + 1)
        +
            \big(4-|S_1|\big)_+
            +
            \ldots
            +
            \big(4-|S_\ell|\big)_+
            +
            \big(  k + 2 - |S_A| \big)
            +
            \big( 3 - |S| \big)_+\,.
\end{align*}
Since 
    $|S_1|+ \cdots + |S_\ell| + |A| + |B| = p$ 
for $\{S_1, \ldots, S_\ell, A, B\} 
    \in \mathcal{X}_p$, 
we obtain 
$
    u \geq  k - p  + 1
    \geq 1
$
and 
$
    v \geq  k - p  + 1
    \geq 1
$,
which shows that 
\begin{align*}
    U_{\alpha \beta\, c_1\dots c_p}
    =
    O\big( |x - \bar x | \big)
    \tand
    V_{\alpha \beta\, c_1\dots c_p}
    =
    O\big( |x - \bar x | \big).
\end{align*}
Therefore 
    $U_{\alpha \beta\, c_1\dots c_p}$ 
and $V_{\alpha \beta\, c_1\dots c_p}$ can be extended continuously to all of $M$
by assigning the value zero for $\bar m \in \sfN_Y$.
\end{proof}

\section{Application to Quantum Markov Semigroups (QMS)}
\label{sec: QMS}

In this section prove Theorem \ref{thm:QMS-intro} by an application of Corollary \ref{Corollary: Main}.
As in Section \ref{sec:intro}, let $\cL$ be the generator of an ergodic quantum Markov semigroup $(\cP_t)_{t \geq 0}$ on a finite dimensional $C^*$-algebra $\cA$ with 
stationary state $\sigma \in \Dens_+$. 
The manifold under consideration is the set of strictly positive density matrices
\begin{align*}
    \Dens_+ = \{ \rho \in \Dens \ : \ 
        \rho > 0        
        \}.
\end{align*}
Note that $\Dens_+$ is a relatively open subset of the affine space $\sigma + T \subseteq \cA$, where
\begin{align*}
    T := 
    \{A\in \cA \ : \ 
        A = A^*, \
        \Tr[A] = 0
    \}.
\end{align*}
Therefore, the tangent space 
of $\Dens_+$ can be naturally identified with $T$. 
We will apply Corollary \ref{Corollary: Main} to the triple $(M, f, X)$ where $M := \Dens_+$ and
\begin{align*}
     f & : \Dens_+ \to \R, &&
        f(\sigma) := H_\sigma(\rho) 
        = \Tr[\rho (\log \rho - \log \sigma)],
\\
    X & : \Dens_+ \to T, &&
        X(\rho) := \cL^\dagger \rho.
\end{align*}

The functional $H_\sigma$ is everywhere strictly positive, except at its global minimum $\sigma$.
Moreover, a standard computation shows that, 
for $\rho \in \Dens_+$ and $A \in T$,
\begin{align}
    \label{eq:ent-diff}
    \partial_\eps\big|_{\eps = 0}
            H_\sigma(\rho + \eps A)
            = \Tr[(\log \rho - \log \sigma)A],
\end{align}
Therefore, the differential of $H_\sigma$ is everywhere non-zero except at $\sigma$, so that we are in a position to apply Corollary \ref{Corollary: Main}.

Recall that we are interested in the \textsc{bkm}-scalar product on $\cA$ given by 
\begin{align*}
    \ip{A,B}_\sigma^{\textsc{bkm}} :=
         \Tr[ A^* \cM_\sigma (B)],
        \quad \text{where }
    \cM_\sigma(B) := 
        \int_0^1 
            \sigma^{1-s} B\sigma^s 
        \dd s,
\end{align*}
for $A, B \in \cA$.
We refer to \cite{Amorim-Carlen:2021} for a recent study of this scalar product. 
It is natural to also consider the inner product on $\cA$ defined in terms of the inverse operator $\cM_\sigma^{-1} : \cA \to \cA$  given by
\begin{align*}
    \ip{A,B}_\sigma^{\widetilde{\textsc{bkm}}} :=
    \Tr[ A^* \cM_\sigma^{-1} (B)],
   \quad \text{where }
    \cM_\sigma^{-1}(B) 
    :=  
    \int_0^\infty
        (t + \sigma)^{-1} B  (t + \sigma)^{-1} 
    \dd t.
\end{align*}
We will use the following simple result.

\begin{lemma}
    \label{lem:SA-equiv}
For a linear operator $\cK : \cA \to \cA$ 
the following assertions are equivalent:
\begin{enumerate}
    \item $\cK$ is selfadjoint with respect to the inner product $\ip{\cdot,\cdot}_\sigma^{\textsc{bkm}}$.
    \item $\cK^\dagger$ is selfadjoint with respect to the inner product $\ip{\cdot,\cdot}_\sigma^{\widetilde{\textsc{bkm}}}$.
\end{enumerate}
\end{lemma}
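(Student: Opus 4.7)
The plan is to translate both selfadjointness conditions into the same operator identity on $\cA$, formulated with respect to the Hilbert--Schmidt pairing $\ip{A,B} = \Tr[A^* B]$ that defines the dagger.

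First I would note that both $\cM_\sigma$ and $\cM_\sigma^{-1}$ are selfadjoint with respect to $\ip{\cdot,\cdot}$. For $\cM_\sigma$ this follows by cyclicity of the trace together with the substitution $s \mapsto 1-s$ in the defining integral; for $\cM_\sigma^{-1}$ it is then automatic, since the inverse of an invertible selfadjoint operator is selfadjoint. Moreover $\cM_\sigma \cM_\sigma^{-1} = \Id$ by construction.

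Next I would unpack condition \emph{(1)}. Writing $\ip{\cK A, B}_\sigma^{\textsc{bkm}} = \Tr[(\cK A)^* \cM_\sigma(B)] = \Tr[A^* \cK^\dagger \cM_\sigma(B)]$ and $\ip{A, \cK B}_\sigma^{\textsc{bkm}} = \Tr[A^* \cM_\sigma \cK(B)]$, condition \emph{(1)} holds for all $A, B$ if and only if
\begin{equation*}
    \cK^\dagger \cM_\sigma \;=\; \cM_\sigma \cK.
\end{equation*}
Similarly, using $(\cK^\dagger)^\dagger = \cK$, one computes $\ip{\cK^\dagger A, B}_\sigma^{\widetilde{\textsc{bkm}}} = \Tr[A^* \cK \cM_\sigma^{-1}(B)]$ and $\ip{A, \cK^\dagger B}_\sigma^{\widetilde{\textsc{bkm}}} = \Tr[A^* \cM_\sigma^{-1} \cK^\dagger(B)]$, so condition \emph{(2)} is equivalent to
\begin{equation*}
    \cK \cM_\sigma^{-1} \;=\; \cM_\sigma^{-1} \cK^\dagger.
\end{equation*}

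The final step is to conjugate by $\cM_\sigma$: multiplying the last identity by $\cM_\sigma$ on the left and on the right produces $\cM_\sigma \cK = \cK^\dagger \cM_\sigma$, which is exactly the identity obtained from \emph{(1)}. Hence \emph{(1)} and \emph{(2)} reduce to the same operator equation and are equivalent. There is no real obstacle here; the statement is a bookkeeping exercise about commuting a selfadjoint weight past a dagger, and the only thing to verify carefully is the selfadjointness of $\cM_\sigma$ (and hence of $\cM_\sigma^{-1}$) with respect to $\ip{\cdot,\cdot}$.
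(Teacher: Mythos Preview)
Your proof is correct and follows exactly the approach of the paper: both conditions are reduced to the single operator identity $\cM_\sigma \cK = \cK^\dagger \cM_\sigma$, and the paper's one-line proof states precisely this. Your version merely spells out the computation in full; the preliminary observation that $\cM_\sigma$ is selfadjoint for $\ip{\cdot,\cdot}$ is true but not actually used in your argument.
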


\begin{proof}
It is readily seen that both assertions are equivalent to
$\cM_\sigma \cK = \cK^\dagger \cM_\sigma$.
\end{proof}

The \emph{entropy production functional}
    $I_\sigma : \Dens_+ \to \R$
is defined by 
\begin{align*}
    I_\sigma(\rho) = - \Tr[ (\log \rho - \log \sigma) \cL^\dagger \rho]
\end{align*}
for $\rho \in \Dens_+$.
Note that indeed 
$
    \ddt H_\sigma(\cP_t^\dagger \rho) 
        = - I_\sigma(\cP_t^\dagger \rho)
$.
The functional $I_\sigma$ is nonnegative and convex \cite{Spohn:1978,Spohn-Lebowitz:1978}. 
The following result shows the \emph{strict} positivity of the entropy production (except at stationarity) under the assumption of \textsc{bkm}-detailed balance.

\begin{proposition}
\label{prop:pos-entropy-production}
    Let $\cL$ be the generator of an ergodic quantum Markov semigroup on a finite dimensional $C^*$-algebra $\cA$, 
    with invariant state $\sigma \in \Dens_+$. 
    If \textsc{bkm}-detailed balance holds, 
    then $I_\sigma(\rho) > 0$ for all $\rho \in \Dens_+$ with $\rho \neq \sigma$.
\end{proposition}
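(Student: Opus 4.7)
The plan is to exploit the correspondence (Lemma~\ref{lem:SA-equiv}) between \textsc{bkm}-detailed balance of $\cL$ and $\widetilde{\textsc{bkm}}_\sigma$-selfadjointness of $\cL^\dagger$, and to combine this with an ergodicity-plus-convexity argument along the semigroup flow.

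First I would use the standard integral representation
\[
\log\rho - \log\sigma
    = \int_0^1 \cM_{\sigma_\tau}^{-1}(\rho-\sigma)\,d\tau,
\qquad
\sigma_\tau := (1-\tau)\sigma + \tau\rho,
\]
together with $\cL^\dagger\sigma = 0$, to rewrite
\[
I_\sigma(\rho) = \int_0^1 \bigl\langle \rho-\sigma,\, -\cL^\dagger(\rho-\sigma)\bigr\rangle_\sigma^{\widetilde{\textsc{bkm}}}\bigr|_{\sigma \to \sigma_\tau}\,d\tau.
\]
By Lemma~\ref{lem:SA-equiv}, \textsc{bkm}-detailed balance makes $\cL^\dagger$ selfadjoint in the $\widetilde{\textsc{bkm}}_\sigma$-inner product, and ergodicity gives $\ker(\cL^\dagger) \cap T = \{0\}$, so $-\cL^\dagger$ is strictly positive on $T$ in that inner product. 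As $\rho \to \sigma$ one has $\cM_{\sigma_\tau}^{-1} \to \cM_\sigma^{-1}$ uniformly in $\tau$, so the integrand converges uniformly to a strictly positive quadratic form in $\rho - \sigma$. This yields the \emph{local} statement that $I_\sigma(\rho) > 0$ on a punctured neighbourhood of $\sigma$ in $\Dens_+$.

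The main obstacle is to extend strict positivity to all of $\Dens_+$, since at $\tau > 0$ the inner product uses the weight $\sigma_\tau \neq \sigma$ and the integrand need not be pointwise nonnegative. I would address this by passing to the semigroup trajectory $\rho_t := e^{t\cL^\dagger}\rho$ and the function $\phi(t) := H_\sigma(\rho_t)$, which satisfies $\phi'(t) = -I_\sigma(\rho_t) \leq 0$, with $\rho_t \to \sigma$ by ergodicity. Since $\rho \neq \sigma$ implies $\rho_t \neq \sigma$ for every finite $t$ (uniqueness of the flow), the local positivity above yields $I_\sigma(\rho_t) > 0$ for all sufficiently large $t$, i.e.\ $\phi'(t_0) < 0$ for some $t_0 > 0$. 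To then conclude $\phi'(0) < 0$, it suffices to show that $\phi$ is convex, for then $\phi'$ is nondecreasing and $\phi'(0) \leq \phi'(t_0) < 0$, giving $I_\sigma(\rho) > 0$.

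Establishing the convexity of $\phi$ under \textsc{bkm}-detailed balance is the main technical step. A direct computation using $\frac{d}{dt}\log\rho_t = \cM_{\rho_t}^{-1}(\cL^\dagger\rho_t)$ and HS-duality yields
\[
\phi''(t) = \|\cL^\dagger\rho_t\|^2_{\widetilde{\textsc{bkm}}_{\rho_t}}
    + \Tr\bigl[\cL(\log\rho_t - \log\sigma)\,\cL^\dagger\rho_t\bigr].
\]
Using the \textsc{bkm}-DB identity $\cL^\dagger = \cM_\sigma\cL\cM_\sigma^{-1}$ and \textsc{bkm}-selfadjointness of $\cL$, I would recast the second term as $\langle \cL F_t, \cL G_t\rangle^{\textsc{bkm}}_\sigma$ with $F_t := \log\rho_t - \log\sigma$ and $G_t := \cM_\sigma^{-1}\rho_t$. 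A Cauchy--Schwarz estimate in the \textsc{bkm} inner product, combined with an operator-inequality comparison between $\cM_{\rho_t}^{-1}$ and $\cM_\sigma^{-1}$, should then produce $\phi''(t) \geq 0$, completing the argument.
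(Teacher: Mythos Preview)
Your local step is essentially correct and is, in fact, the heart of the paper's own argument: your integral representation, evaluated at $\rho\to\sigma$, is exactly the Hessian of $I_\sigma$ at $\sigma$, which equals $2\langle A,-\cL^\dagger A\rangle_\sigma^{\widetilde{\textsc{bkm}}}$ on $T$ and is positive definite by $\widetilde{\textsc{bkm}}$-selfadjointness plus ergodicity. (A small remark: selfadjointness together with $\ker(\cL^\dagger)\cap T=\{0\}$ does not by itself give a \emph{sign}; you also need that the spectrum of $\cL^\dagger$ lies in the closed left half-plane, which follows from the semigroup being contractive/ergodic. The paper gets the sign instead from convexity of $I_\sigma$.)

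The genuine gap is your global step. You reduce everything to the convexity of $\phi(t)=H_\sigma(\rho_t)$ along the flow, i.e.\ to monotonicity of the entropy production $t\mapsto I_\sigma(\rho_t)$. This is \emph{not} a standard fact, even under detailed balance, and your sketch does not establish it. After your (correct) computation, the second term in $\phi''(t)$ is $\langle \cL F_t,\cL G_t\rangle^{\textsc{bkm}}_\sigma$, which can be negative; Cauchy--Schwarz only yields $|\langle \cL F_t,\cL G_t\rangle_\sigma^{\textsc{bkm}}|\le \|\cL F_t\|_{\textsc{bkm}_\sigma}\|\cL G_t\|_{\textsc{bkm}_\sigma}$, and there is no operator inequality relating $\cM_{\rho_t}^{-1}$ to $\cM_\sigma^{-1}$ that would let you dominate this by the first term $\|\cL^\dagger\rho_t\|^2_{\widetilde{\textsc{bkm}}_{\rho_t}}$ --- the weights $\rho_t$ and $\sigma$ are different, and any comparison between $\cM_{\rho_t}^{-1}$ and $\cM_\sigma^{-1}$ goes the wrong way or carries constants you cannot control. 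The phrase ``should then produce $\phi''(t)\ge 0$'' hides the entire difficulty.

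The paper avoids this detour completely. It uses the already-cited fact (Spohn) that $I_\sigma$ is \emph{convex as a function on} $\Dens_+$: a nonnegative convex function whose Hessian at its minimiser $\sigma$ is strictly positive definite is strictly positive away from $\sigma$. So once you have your local positivity (equivalently, positive-definiteness of the Hessian), you are done --- no flow, no second time-derivative. Replace your global argument with this one-line appeal to convexity of $I_\sigma$.
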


\begin{proof}
    As remarked above, $I_\sigma$ is nonnegative and convex. Therefore, it suffices to show that $I_\sigma$ is strictly convex at its minimum $\sigma$.
    Take $A \in T$ with $A \neq 0$.

    For $\rho \in \Dens_+$  we set $\rho_\eps := \rho + \eps A$ for $|\eps|$ sufficiently small to ensure that $\rho_\eps \in \Dens_+$.
    Using the standard identities
    \begin{align*}
    \partial_\eps\big|_{\eps = 0} 
        \log \rho_\eps
            = \int_0^\infty  (t + \rho)^{-1} A  (t + \rho)^{-1} \dd t
            \tand
            \partial_\eps\big|_{\eps = 0} 
            (s + \rho_\eps)^{-1}
                = - (s + \rho)^{-1} A  (s + \rho)^{-1}
    \end{align*} 
    for $s > 0$, we obtain
    \begin{align*}
        \partial_\eps\big|_{\eps = 0}
            I_\sigma(\rho_\eps)
        = \Tr[  (\log \rho - \log \sigma) \cL^\dagger A ]
        + \Tr\bigg[  \int_0^\infty  (t + \rho)^{-1} A  (t + \rho)^{-1}  \cL^\dagger  \rho \dd t \bigg],
    \end{align*}
    and 
    \begin{align*}
        \partial_\eps^2\big|_{\eps = 0}
             I_\sigma(\rho_\eps)
         & =2 \Tr\bigg[  \int_0^\infty  (t + \rho)^{-1} A  (t + \rho)^{-1}  \cL^\dagger  A 
        \dd t \bigg]
        \\ & \qquad
        - 2  
        \Tr\bigg[  \int_0^\infty 
         (t + \rho)^{-1} A  (t + \rho)^{-1} A
         (t + \rho)^{-1}
          \cL^\dagger  \rho \dd t \bigg].
    \end{align*}
    In particular, for $\sigma_\eps := \sigma + \eps A$, we obtain
    \begin{align*}
        \partial_\eps^2\big|_{\eps = 0}
             I_\sigma(\sigma_\eps)
         & =2 \Tr\bigg[  \int_0^\infty  (t + \sigma)^{-1} A  (t + \sigma)^{-1}  \cL^\dagger  A 
        \dd t \bigg]
        = 
        2 \ip{A, \cL^\dagger A}_\sigma^{\widetilde{\textsc{bkm}}}.
    \end{align*}
    Since $I_\sigma$ is convex, this identity implies that $\ip{A, \cL^\dagger A}_\sigma^{\widetilde{\textsc{bkm}}} \geq 0$.
    
    On the other hand, $\cL^\dagger$
    is selfadjoint with respect to $\ip{\cdot,\cdot}_\sigma^{\widetilde{\textsc{bkm}}}$
    by Lemma \ref{lem:SA-equiv} and the assumption of
    \textsc{bkm}-detailed balance.
    Moreover, the restriction of $\cL^\dagger$ to $T$ is invertible by the ergodicity assumption.
    Therefore, 
        $\ip{A, \cL^\dagger A}_\sigma^{\widetilde{\textsc{bkm}}} \neq 0$.
    
    We thus conclude that 
    $\ip{A, \cL^\dagger A}_\sigma^{\widetilde{\textsc{bkm}}} > 0$, which yields the result.
\end{proof}

\begin{proof}[Proof of Theorem \ref{thm:QMS-intro}]
    First we will translate condition $(iii)$ of Corollary \ref{Corollary: Main}, namely the selfadjointness of the linearised operator $\Lambda$ with respect to the Hessian scalar product $h$. We claim that this is exactly the assumption of \textsc{bkm}-detailed balance in our setting. 
   
    Indeed, since $\cL^\dagger$ is a linear operator, its linearisation 
    $\Lambda : T \to T$
    appearing in condition $(iii)$
    is simply given by 
    $\Lambda := \cL^\dagger$. 
Moreover, the Hessian of $\rho \mapsto H_\sigma(\rho)$ at $\rho = \sigma$ is given by 
\begin{align*}
    h(A, B)
        :=
    \partial_\eps\big|_{\eps = 0} 
    \partial_\eta\big|_{\eta = 0} 
        H_\sigma\big( \sigma + \eps A + \eta B \big)
        = 
    \int_0^\infty 
        \Tr\Big[ 
            \frac{1}{s + \sigma}  
                A
            \frac{1}{s + \sigma}  
                B
        \Big]  \dd s
        = \ip{A,B}_\sigma^{\widetilde{\textsc{bkm}}}
\end{align*}
for $A, B \in T$. 
Hence the Hessian scalar product in condition $(iii)$ is the ${\widetilde{\textsc{bkm}}}$-scalar product. 
Thus, condition $(iii)$ is the ${\widetilde{\textsc{bkm}}}$-selfadjointness of $\cL^\dagger$. 
By Lemma \ref{lem:SA-equiv} this corresponds to the ${{\textsc{bkm}}}$-selfadjointness of $\cL$, which is the assumption of ${{\textsc{bkm}}}$-detailed balance.

This argument shows that the necessity of \textsc{bkm}-detailed balance for the gradient flow structure follows from Corollary \ref{Corollary: Main}. 
To show that \textsc{bkm}-detailed balance is also sufficient, we note first that condition $(ii)$ of Corollary \ref{Corollary: Main} is simply the stationarity condition $\cL^\dagger \sigma = 0$, which holds by assumption.
Thus, it remains to show that
condition $(i)$ of Corollary \ref{Corollary: Main} is implied by the assumption of ${{\textsc{bkm}}}$-detailed balance.
Then the existence of the gradient flow structure follows by applying Corollary \ref{Corollary: Main} in the opposite direction.

For this purpose, recall that $f = H_\sigma$ and $X = \cL^\dagger$, so that
\begin{align*}
    \nabla_{X} f
    = \Tr[(\log \rho - \log \sigma) \cL^\dagger \rho] 
    = - I_\sigma.   
\end{align*}
Hence, condition $(i)$ is the strict positivity of the entropy production $I_\sigma(\rho)$  or $\rho \neq \sigma$, which follows from the assumption of ${{\textsc{bkm}}}$-detailed balance by Proposition \ref{prop:pos-entropy-production}. 
\end{proof}

\subsection*{Acknowledgement} {\small 
J.M. gratefully acknowledges support by the European Research Council (ERC) under the European Union's Horizon 2020 research and innovation programme (grant agreement No 716117), and by the Austrian Science Fund (FWF), Project SFB F65.
}

% \bibliographystyle{abbrv}
% \bibliography{PapersLibrary}

\begin{thebibliography}{10}

    \bibitem{Ambrosio-Gigli-Savare:2008}
    L.~Ambrosio, N.~Gigli, and G.~Savar\'{e}.
    \newblock {\em Gradient flows in metric spaces and in the space of probability
      measures}.
    \newblock Lectures in Mathematics ETH Z\"{u}rich. Birkh\"{a}user Verlag, Basel,
      second edition, 2008.
    
    \bibitem{Amorim-Carlen:2021}
    {\'E}.~Amorim and E.~A. Carlen.
    \newblock Complete positivity and self-adjointness.
    \newblock {\em Linear Algebra Appl.}, 611:389--439, 2021.

    \bibitem{Barta-Chill-Fasangova:2012}
    T.~B{\'a}rta, R.~Chill, and E.~Fa{\v{s}}angov{\'a}.
    \newblock Every ordinary differential equation with a strict {Lyapunov}
      function is a gradient system.
    \newblock {\em Monatsh. Math.}, 166(1):57--72, 2012.
    
    \bibitem{Benamou-Brenier:2000}
    J.-D. Benamou and Y.~Brenier.
    \newblock A computational fluid mechanics solution to the {M}onge-{K}antorovich
      mass transfer problem.
    \newblock {\em Numer. Math.}, 84(3):375--393, 2000.
    
    \bibitem{Bily:2014}
    M.~B\'il\'y.
    \newblock Transformations of {ODE}s into gradient systems in stationary points.
    \newblock Bachelor thesis, Charles University in Prague, 2014.
    
    \bibitem{Carlen-Maas:2014}
    E.~A. Carlen and J.~Maas.
    \newblock An analog of the 2-{W}asserstein metric in non-commutative
      probability under which the fermionic {F}okker-{P}lanck equation is gradient
      flow for the entropy.
    \newblock {\em Comm. Math. Phys.}, 331(3):887--926, 2014.
    
    \bibitem{Carlen-Maas:2017}
    E.~A. Carlen and J.~Maas.
    \newblock Gradient flow and entropy inequalities for quantum {M}arkov
      semigroups with detailed balance.
    \newblock {\em J. Funct. Anal.}, 273(5):1810--1869, 2017.
    
    \bibitem{Carlen-Maas:2020}
    E.~A. Carlen and J.~Maas.
    \newblock Non-commutative {C}alculus, {O}ptimal {T}ransport and {F}unctional
      {I}nequalities in {D}issipative {Q}uantum {S}ystems.
    \newblock {\em J. Stat. Phys.}, 178(2):319--378, 2020.
    
    \bibitem{Dietert:2015}
    H.~Dietert.
    \newblock Characterisation of gradient flows on finite state {M}arkov chains.
    \newblock {\em Electron. Commun. Probab.}, 20:no. 29, 8, 2015.
    
    \bibitem{Erbar-Henderson-Menz:2017}
    M.~Erbar, C.~Henderson, G.~Menz, and P.~Tetali.
    \newblock Ricci curvature bounds for weakly interacting {M}arkov chains.
    \newblock {\em Electron. J. Probab.}, 22:Paper No. 40, 23, 2017.
    
    \bibitem{Erbar-Maas:2012}
    M.~Erbar and J.~Maas.
    \newblock Ricci curvature of finite {M}arkov chains via convexity of the
      entropy.
    \newblock {\em Arch. Ration. Mech. Anal.}, 206(3):997--1038, 2012.
    
    \bibitem{Erbar-Maas:2014}
    M.~Erbar and J.~Maas.
    \newblock Gradient flow structures for discrete porous medium equations.
    \newblock {\em Discrete Contin. Dyn. Syst.}, 34(4):1355--1374, 2014.
    
    \bibitem{Fathi-Maas:2016}
    M.~Fathi and J.~Maas.
    \newblock Entropic {R}icci curvature bounds for discrete interacting systems.
    \newblock {\em Ann. Appl. Probab.}, 26(3):1774--1806, 2016.
    
    \bibitem{Gallot-Hulin-Lafontaine:2004}
    S.~Gallot, D.~Hulin, and J.~Lafontaine.
    \newblock {\em Riemannian geometry}.
    \newblock Universitext. Springer, Cham, 1987.
    
    \bibitem{Jordan-Kinderlehrer-Otto:1998}
    R.~Jordan, D.~Kinderlehrer, and F.~Otto.
    \newblock The variational formulation of the {F}okker-{P}lanck equation.
    \newblock {\em SIAM J. Math. Anal.}, 29(1):1--17, 1998.
    
    \bibitem{Krantz-Parks:2002}
    S.~G. Krantz and H.~R. Parks.
    \newblock {\em A primer of real analytic functions.}
    \newblock Birkh{\"a}user Adv. Texts, Basler Lehrb{\"u}ch. Boston, MA:
      Birkh{\"a}user, 2nd ed. edition, 2002.
    
    \bibitem{Maas:2011}
    J.~Maas.
    \newblock Gradient flows of the entropy for finite {M}arkov chains.
    \newblock {\em J. Funct. Anal.}, 261(8):2250--2292, 2011.
    
    \bibitem{Mielke:2011}
    A.~Mielke.
    \newblock A gradient structure for reaction-diffusion systems and for
      energy-drift-diffusion systems.
    \newblock {\em Nonlinearity}, 24(4):1329--1346, 2011.
    
    \bibitem{Mielke:2013a}
    A.~Mielke.
    \newblock Dissipative quantum mechanics using {GENERIC}.
    \newblock In {\em Recent trends in dynamical systems}, volume~35 of {\em
      Springer Proc. Math. Stat.}, pages 555--585. Springer, Basel, 2013.
    
    \bibitem{Mielke:2013}
    A.~Mielke.
    \newblock Geodesic convexity of the relative entropy in reversible {M}arkov
      chains.
    \newblock {\em Calc. Var. Partial Differential Equations}, 48(1-2):1--31, 2013.
    
    \bibitem{Mittnenzweig-Mielke:2017}
    M.~Mittnenzweig and A.~Mielke.
    \newblock An entropic gradient structure for {L}indblad equations and couplings
      of quantum systems to macroscopic models.
    \newblock {\em J. Stat. Phys.}, 167(2):205--233, 2017.
    
    \bibitem{Spohn:1978}
    H.~Spohn.
    \newblock Entropy production for quantum dynamical semigroups.
    \newblock {\em J. Mathematical Phys.}, 19(5):1227--1230, 1978.
    
    \bibitem{Spohn-Lebowitz:1978}
    H.~Spohn and J.~L. Lebowitz.
    \newblock Irreversible thermodynamics for quantum systems weakly coupled to
      thermal reservoirs.
    \newblock {\em Adv. Chem. Phys}, 38:109--142, 1978.
    
    \end{thebibliography}

\end{document}